\theoremstyle{plain}
\newtheorem{thrm}{Theorem}[section]
\newtheorem{lmm}[thrm]{Lemma}
\newtheorem{crllr}[thrm]{Corollary}
\newtheorem{prpstn}[thrm]{Proposition}
\theoremstyle{definition}
\newtheorem{dfntn}[thrm]{Definition}
\theoremstyle{plain}
\newenvironment{acknowledgement}{\par\addvspace{17pt}\small\rmfamily\noindent}{\par\addvspace{6pt}}
\def\d{\displaystyle}
\def\R{\mathbf{R}}
\def\E{\mathbf{E}}
\def\P{\mathbf{P}}
\def\M{\mathcal{M}}
\def\NB{\mathcal{NB}}
\def\barsmh{\bar{s}_{\hat{m}}}
\def\barsm{\bar{s}_m}
\def\barsmp{\bar{s}_{m'}}
\def\barga{\bar{\gamma}}
\def\hatsmh{\hat{s}_{\hat{m}}}
\def\hatsm{\hat{s}_m}
\def\hatsmp{\hat{s}_{m'}}
\def\mh{\hat{m}}
\def\Omf{\Omega_{m_f}(\epsilon)}
\def\OO1{\Omega_1(\xi)}
\def\mO2{\Omega_2(\xi)}
\def\[[{[\![}
\def\O{\mathbf{1}_{\Omega(\epsilon,\xi)}}
\begin{document}

\title{\bf Segmentation of the Poisson and negative binomial rate models: a penalized estimator}

\author{A. Cleynen$^{1}$ and E. Lebarbier$^{1}$}
\date{}
\maketitle \noindent $^{1}${\it AgroParisTech/ INRA MIA 518, 16 rue
Claude
Bernard, 75231 Paris Cedex 05, France. \\
E-mail: alice.cleynen@agroparistech.fr\\
E-mail: emilie.lebarbier@agroparistech.fr} \\

\date{\today}
\maketitle

\begin{abstract}
We consider the segmentation problem of Poisson and negative
binomial (i.e. overdispersed Poisson) rate distributions. In
segmentation, an important issue remains the choice of the number of
segments. To this end, we propose a penalized $\log$-likelihood
estimator where the penalty function is constructed in a
non-asymptotic context following the works of L. Birg\'e and P.
Massart. The resulting estimator is proved to satisfy an oracle
inequality. The performances of our criterion is assessed using
simulated and real datasets in the RNA-seq data analysis context.
\end{abstract}

\vspace{0.2cm} \noindent
{\it Mathematics subject classification 2010:} primary 62G05, 62G07; secondary 62P10\\
{\it Keywords and phrases:} Density estimation; Change-points
detection; Count data (RNA-seq); Poisson and negative binomial
distributions; Model selection.
\section*{Introduction}

We consider a multiple change-point detection setting for count
datasets, which can be written as follows: we observe a finite
sequence $\{y_t\}_{t \in \{1,\dots,n\}}$ realisation of independent
variables $Y_t$. These variables are supposed to be drawn from a
probability distribution $\mathcal{G}$ which depends on a set of
parameters. Here two types of parameters are distinguished:
$$
Y_t\sim \mathcal{G}(\theta_t,\phi)=s(t), \; 1\leq t\leq n,
$$
where $\phi$ is a constant parameter while the $\theta$s are
point-specific. In many contexts, we might want to consider that the
$\theta$s are piece-wise constant and so subject to an unknown
number $K-1$ of abrupt changes (for instance with climatic or
financial data). Thus, we want to assume the existence of partition
of $\{1,\dots,n\}$ into $K$ segments within which the observations
follow the same distribution and between which observations have
different distributions, i.e. $\theta$ is constant within a segment
and differ from a segment to another. A motivating example is
sequencing data analysis. For instance, the output of RNA-seq
experiments is the number of reads (i.e. short portions of the
genome) which first position maps to each location of a genome of
reference. Supposing that we dispose of such a sequence, we expect
to observe a stationarity in the amount of reads falling in
different areas of the genome: expressed genes, intronic regions,
etc. We wish to localize those regions that are biologically
significant. In our context, we consider for $\mathcal{G}$ the
Poisson and negative binomial distributions,
adapted to RNA-seq experiment analysis \cite{Risso_norma}. \\
Change-point detection problems are not new and many methods have
been proposed in the literature. For count data-sets,
\cite{BraunMuller1998} provide a detailed bibliography of methods in
the particular case of the segmentation of the DNA sequences that
includes Bayesian approaches, scan statistics, likelihood-ratio
tests, binary segmentation and numerous other methods such as
penalized contrast estimation procedures. In a Bayesian framework,
\cite{BCG00} proposes to use an exact "ICL" criterion for the choice
of $K$, while its approximation is computed in the constrained HMM
approach of \cite{luong2012fast}.  In this paper, we consider a
penalized contrast estimation method which consists first, for every
fixed $K$, in finding the best segmentation in $K$ segments by
minimizing the contrast over all the partitions with $K$ segments,
and then in selecting a convenient number of segments $K$ by
penalizing the contrast. Choosing the number of segments, i.e.
choosing a "good" penalty, is a crucial issue and not so easy. The
most basic examples of penalty are the Akaike Information Criterion
(AIC \cite{AIC}) and the Bayes Information Criterion (BIC
\cite{Yao88}) but these criteria are not well adapted in the
segmentation context and tend to overestimate the number of
change-points (see \cite{BMMinPen, zhang_modified_2007} for
theoretical explanations). In this particular context, some modified
versions of these criteria have been proposed. For instance,
\cite{zhang_modified_2007,BraunBraunMuller2000}  have proposed
modified versions of the BIC criterion (shown to be consistent) in
the segmentation of Gaussian processes and DNA sequences
respectively. However, these criteria are based on asymptotic
considerations. In the last years there has been an extensive
literature influenced by \cite{BM97,BBM} introducing non-asymptotic
model selection procedures, in the sense that the size of the models
as well as the size of the list of models are allowed to be large
when $n$ is large. This penalized contrast procedure consists in
selecting a model amongst a collection such that its performance is
as close as possible to that of the best but unreachable model in
terms of risk. This approach has been now considered in various
function estimation contexts. In particular, \cite{AkakpoESAIM}
proposed a penalty for estimating the density of independent
categorical variables in a least-squares framework, while
\cite{RBPoisson,BirgePoisson}, or \cite{BaraudBirge},
focused on the estimation of the density of a Poisson process. \\
When the number of models is large, as in the case of an exhaustive
search in segmentation problem, it can be shown that penalties which
only depend on the number of parameters of each model, as for the
classical criteria, are theoretically (and also practically) not
adapted. This was suggested by
\cite{lebarbier_detecting_2005,BMMinPen} who show that the penalty
term needs to be well defined, and in particular needs to depend on
the complexity of the list of models, i.e. the number of models
having the same dimension. For this reason, following the work of
\cite{BM97} and in particular \cite{Cast00} in the density
estimation framework, we consider a penalized $\log$-likelihood
procedure to estimate the true distribution $s$ of a Poisson or
negative binomial-distributed sequence $\mathbf{y}$. We prove that,
up to a $\log n$ factor, the resulting estimator satisfies an oracle
inequality.

The paper is organized as follows. The general framework is
described in Section \ref{framework}. More precisely, we present our
proposed penalized maximum-likelihood estimator, the form of the
penalty and give some non-asymptotic risk bounds for the resulting
estimator. The studies of the two considered models (Poisson and
negative binomial) are done in parallel along the paper. Some
exponential bounds are derived in Section \ref{IntermediateResults}.
A simulation study is performed to compare our proposed criterion
with others and an application to the segmentation of RNA-seq data
illustrates the procedure in Section \ref{Appli}. The proof of the
main result is given in Section \ref{dem-theo} for which the proofs
of some intermediate results are given in the Appendix
\ref{Appendices}.

\section{Model Selection Procedure} \label{framework}
\subsection{Penalized maximum-likelihood estimator}

Let us denote by $m$ a partition of $\[[1,n]\!]$, $m = \{
\[[1,\tau_1 \[[, \[[\tau_1, \tau_2\[[, \dots, \[[ \tau_k, n ]\!] \}$
and by $\mathcal{M}_n$ a set of partitions of $\[[1,n]\!]$. In our
framework we want to estimate the distribution $s$ defined by
$s(t)=\mathcal{G}(\theta_t,\phi), \, 1\leq t \leq n $, and we
consider the two following models:
  \begin{eqnarray*}
  \begin{array}{lll}
  \mathcal{G}(\theta_t,\phi) & =  \mathcal{P}(\lambda_t) & (\mathcal{P}) \\
  \mathcal{G}(\theta_t,\phi) & =  \NB(p_t,\phi) \quad \quad & (\NB)
  \end{array}
  \end{eqnarray*}

In the ($\NB$) case, we suppose that the over-dispersion
parameter $\phi$ is known. We define the collection of models :

  \begin{dfntn}
The collection of models associated to partition $m$ is
$\mathcal{S}_m$ the set of distribution of sequences of length $n$
such that for each element $s_m$ of $\mathcal{S}_m$, for each
segment $J$ of $m$, and for each $t$ in $J$, $s_m(t) =
\mathcal{G}(\theta_J,\phi)$:
    \begin{eqnarray*}
      \mathcal{S}_m =\left\{s_m \;| \;  \forall J \in m, \; \forall t \in J, \; s_m(t) = \mathcal{G}(\theta_J,\phi)
      \right\}.
    \end{eqnarray*}
\end{dfntn}

We shall denote by $|m|$ the number of segments in partition $m$,
and by $|J|$ the length of segment $J$.

We consider the log-likelihood contrast $\gamma(u) = \sum_{t=1}^n
-\log \P_u(Y_t)$, namely respectively for $u(t)=\mathcal{P}(\mu_t)$ and $u(t)=\NB(q_t,\phi)$,
  \begin{eqnarray*}
  \begin{array}{lll}
    \gamma(u) &= \sum_{t=1}^n \mu_t - Y_t \log(\mu_t) + \log(Y_t!), & (\mathcal{P}) \\
    \gamma(u) &= \sum_{t=1}^n -\phi \log q_t - Y_t \log(1-q_t)-\log\left(\frac{\Gamma(\phi+Y_t)}{\Gamma(\phi)Y_t!}\right). \quad \quad& (\NB)
    \end{array}
  \end{eqnarray*}

Then the minimal contrast estimator $\hatsm$ of $s$ on the collection $\mathcal{S}_m$ is
  \begin{eqnarray} \label{esti}
    \hatsm = \arg \min_{u\in \mathcal{S}_m} \gamma(u),
  \end{eqnarray}

so that, noting $\bar{Y}_J = \dfrac{\sum_{t\in J} Y_t}{|J|}$, for all $J \in m$ and $t \in J$
\begin{eqnarray} \label{shat}
 \hatsm(t) = \mathcal{P}(\bar{Y}_J) \, \text{for } (\mathcal{P}) \quad \text{and} \; \hatsm(t) =  \NB\left(\dfrac{\phi}{\phi + \bar{Y}_J},\phi\right) \, \text{for } (\NB).
\end{eqnarray}

Therefore, for each partition $m$ of $\M_n
$ we can obtain the best estimator $\hatsm$ as in equation
(\ref{shat}), and thus define a collection of estimators
$\left\{(\hatsm)_{m\in \M_n}\right\} $. Ideally, we would wish to
select the estimator $\hat{s}_{m(s)}$ amongst this collection with the minimum given risk. In the
log-likelihood framework, it is natural to consider the
Kullback-Leibler risk, with $K(s,u)=\E\left[\gamma(u)-\gamma(s)
\right]$. In the following we note $\E$ and $\P$ the expectation and
the probability under the true distribution $s$ respectively
(otherwise the underlying distribution is mentioned). In our models,
the Kullback-Leibler between distributions $s$ and $u$ can be
developed into
  \begin{eqnarray*}
  \begin{array}{lll}
 K(s,u) &= \sum_{t=1}^n \left(\mu_t - \lambda_t -\lambda_t \log\dfrac{\mu_t}{\lambda_t} \right), & (\mathcal{P}) \\
 K(s,u) &= \phi \sum_{t=1}^n \log \left(\dfrac{p_t}{q_t} \right) + \dfrac{1-p_t}{p_t}\log \left(\dfrac{1-p_t}{1-q_t} \right). \quad \quad & (\NB)
 \end{array}
\end{eqnarray*}

Unfortunately, minimizing this risk requires the knowledge of the
true distribution $s$, and is unreachable. We will therefore want to
consider the estimator $\hatsmh $ where $\mh$ minimizes $
\gamma(\hatsm) + pen(m)$ for a well-chosen function $pen$ (depending
on the data). By doing so, we hope to select an estimator $\hatsmh$
whose risk is as close as possible to the risk of
$\hat{s}_{m(s)}=\arg\min_{m\in\mathcal{M}_n} \E_s[K(s,\hat{s}_m)]$ in the sense that
  \begin{eqnarray*}
    \E[K(s,\hatsmh)] \leq C \, \E[K(s,\hat{s}_{m(s)})],
  \end{eqnarray*}
where $C$ is a nonnegative constant hopefully close to $1$. We therefore introduce the following definition: \newline

  \begin{dfntn} \label{def-hatsm}
    Let $\mathcal{M}_n$ be a collection of partitions of $[\![1,n]\!]$ constructed on a partition $m_f$ (i.e.  $m_f$ is a refinement of every $m$ in $\mathcal{M}_n$). Given a nonnegative, increasing in the size of $m$ penalty function $pen$: $\M_n \rightarrow \R_+$, and choosing
        \begin{eqnarray*}
      \hat{m} = \arg \min_{m\in \mathcal{M}_n} \{\gamma(\hatsm) + pen(m) \},
    \end{eqnarray*}
    we define the penalized maximum-likelihood estimator as $\hatsmh$.
  \end{dfntn}

In the following Section we provide a choice of penalty function,
and show that the resulting estimator satisfies an oracle
inequality.

\subsection{Choice of the penalty function} \label{Main}
\subsubsection*{Main result}

The following result shows that for an appropriate choice of the penalty function, we have a non-asymptotic risk bound for the penalized maximum-likelihood estimator.

  \begin{thrm} \label{theo}
    Let $\mathcal{M}_n$ be a collection of partitions constructed on a partition $m_f$ such that there exist absolute positive constants $\rho_{min}$, $\rho_{max}$ and $\Gamma$ satisfying:
    \begin{itemize}
      \item $\forall t, \rho_{min} \leq \theta_t \leq \rho_{max}$ and
      \item $\forall J \in m_f, |J|\geq \Gamma (\log(n))^2$.
    \end{itemize}
    Let $(L_m)_{m\in \mathcal{M}_n}$ be some family of positive weights  satisfying
    \begin{eqnarray}
      \Sigma = \sum_{m\in\mathcal{M}_n} \exp(-L_m|m|) < +\infty . \label{weights}
    \end{eqnarray}
    Let $\beta > 1/2$ in the Poisson case, $\beta > 1/4$ in the negative binomial case. If for every $m \in \mathcal{M}_n$
    \begin{eqnarray}
      pen(m)\geq \beta |m| \left(1 + 4\sqrt{L_{m}} \right)^2,
\label{penalty}
    \end{eqnarray}
    then
    \begin{eqnarray*}
      \E\left[h^2(s,\hatsmh) \right] &\leq & C_{\beta} \inf_{m\in\mathcal{M}_n} \left\{K(s,\barsm) +pen(m)\right\} + C(\phi,\Gamma,\rho_{min},\rho_{max},\beta,\Sigma),
    \end{eqnarray*}
    with $C_\beta=\dfrac{ (16 \beta)^{1/3}}{(2 \beta)^{1/3}-1}$ in model $(\mathcal{P})$  and $C_\beta=\dfrac{ (4 \beta)^{1/3}}{(4 \beta)^{1/3}-1}$ in model
    $(\NB)$.
  \end{thrm}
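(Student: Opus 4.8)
The plan is to follow the Birg\'e--Massart / Castellan program for penalized maximum likelihood: control the Kullback--Leibler loss on a large‑probability event, and only afterwards pass to the (bounded) Hellinger loss. Fix $m\in\M_n$ and let $\barsm$ be the Kullback--Leibler projection of $s$ onto $\mathcal{S}_m$, i.e. $\barsm=\arg\min_{u\in\mathcal{S}_m}K(s,u)$; a one‑line computation identifies it as $\barsm(t)=\mathcal{P}(\bar\lambda_J)$ on each segment $J$ of $m$ in model $(\mathcal{P})$, with $\bar\lambda_J$ the average of the true rates over $J$, and as the analogous segment‑averaged mean in $(\NB)$. Introduce the centred contrast $\barga(u)=\gamma(u)-\E[\gamma(u)]$, so that $\gamma(u)-\gamma(s)=K(s,u)+\barga(u)-\barga(s)$ for every $u$. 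Applying the definition of $\mh$ against the competitor $\barsm$ (using $\gamma(\hatsm)\le\gamma(\barsm)$, which holds because $\barsm\in\mathcal{S}_m$) and subtracting $\gamma(s)$, the terms $\barga(s)$ cancel and one is left with the basic inequality
\[
  K(s,\hatsmh)\ \le\ K(s,\barsm)\ +\ \bigl(\barga(\barsm)-\barga(\hatsmh)\bigr)\ +\ pen(m)-pen(\mh).
\]

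The core of the argument is to control the centred term $\barga(\barsm)-\barga(\hatsmh)$ on a suitable event. Split it as $\bigl(\barga(\barsm)-\barga(s)\bigr)+\bigl(\barga(s)-\barga(\hatsmh)\bigr)$. The first piece has deterministic summands; Bernstein's inequality together with a union bound over $m\in\M_n$ weighted by $\exp(-L_m|m|)$ shows that, outside an event of probability $O(\Sigma)$, it is at most $\eta K(s,\barsm)$ plus a fixed residual, for a preassigned small $\eta$. The second piece is the genuine empirical‑process term; since $s$, $\barsm$ and every $\hatsmp$ are piecewise constant along the common refinement $m_f$, it decomposes segmentwise and, after renormalisation, is bounded by $\sup_{m'\in\M_n}\sup_{u\in\mathcal{S}_{m'}}\bigl(\barga(s)-\barga(u)\bigr)$. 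Using the exponential inequalities of Section~\ref{IntermediateResults} one produces an event $\Omega$, with $\P(\Omega^c)$ at most $c\Sigma$ up to a term decaying polynomially in $n$, on which, for all $m'$ and all $u\in\mathcal{S}_{m'}$,
\[
  \barga(s)-\barga(u)\ \le\ \eta\bigl(K(s,u)+K(s,\barsm)\bigr)\ +\ \beta\,|m'|\bigl(1+4\sqrt{L_{m'}}\bigr)^2\ +\ R,
\]
with $R=R(\phi,\Gamma,\rho_{min},\rho_{max},\beta,\Sigma)$ fixed. Both structural hypotheses are used here in an essential way: $\rho_{min}\le\theta_t\le\rho_{max}$ together with $|J|\ge\Gamma(\log n)^2$ for $J\in m_f$ ensures that, with overwhelming probability, every segment mean $\bar Y_J$, $J\in m_f$, stays in a fixed compact set (and hence so does every $\bar Y_J$ for $J$ a segment of any $m\in\M_n$), so the log‑likelihood increments — including the $\log\Gamma(\phi+Y_t)$ terms in $(\NB)$ — have a controlled sub‑exponential Orlicz norm and a variance proxy comparable to $K(s,u)+K(s,\barsm)$; the thresholds $\beta>1/2$ in $(\mathcal{P})$ and $\beta>1/4$ in $(\NB)$ are exactly what makes the Bernstein constants close, which is also why they reappear inside $C_\beta$.

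It then remains to assemble the pieces. Using the last display with $u=\hatsmh$ and $m'=\mh$, inserting it into the basic inequality, and exploiting $pen(\mh)\ge\beta|\mh|(1+4\sqrt{L_{\mh}})^2$ to absorb the penalty‑shaped term, one obtains on $\Omega$
\[
  (1-\eta)\,K(s,\hatsmh)\ \le\ (1+2\eta)\,K(s,\barsm)\ +\ pen(m)\ +\ R',
\]
and, since $h^2(s,\cdot)$ is bounded above by $K(s,\cdot)$ up to an absolute constant, the same bound controls $h^2(s,\hatsmh)\mathbf{1}_\Omega$; minimising over $m\in\M_n$ produces the infimum in the statement. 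On $\Omega^c$ one simply uses that $h^2$ is bounded — this is precisely why the theorem is phrased in $h^2$ rather than in $K$, whose logarithmic tails would make $\E[K\,\mathbf{1}_{\Omega^c}]$ uncontrollable — so that $\E\bigl[h^2(s,\hatsmh)\mathbf{1}_{\Omega^c}\bigr]\le 2\P(\Omega^c)$. Adding the two contributions and collecting $R'$, $\P(\Omega^c)$ and the numerical factors into $C(\phi,\Gamma,\rho_{min},\rho_{max},\beta,\Sigma)$ yields the announced oracle inequality; optimising the free parameters $\eta$ and the splitting constants over their admissible ranges produces the stated values of $C_\beta$, the cube‑root exponents arising from minimising a term of the shape $a\theta+b\theta^{-2}$ that balances the approximation term $K(s,\barsm)$, the penalty and the fluctuation.

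The step I expect to be the main obstacle is the uniform control of the empirical process in the second paragraph. Because the observations are Poisson or negative binomial (only sub‑exponentially concentrated) and the contrast involves the unbounded functions $\log(Y_t!)$ and $\log\Gamma(\phi+Y_t)$, bounded‑difference concentration is unavailable; one must first localise to the high‑probability event on which all $\bar Y_J$, $J\in m_f$, lie in a fixed compact set — which is exactly what forces the logarithmic lower bound on the lengths of the $m_f$‑segments — and only then apply a Bernstein‑type inequality. Matching the resulting variance proxy to $K(s,u)+K(s,\barsm)$ uniformly over $u\in\mathcal{S}_{m'}$ and over the exponentially large collection $\M_n$, while keeping all constants free of the dimension $|m'|$, is the delicate technical heart of the proof and the origin of the precise conditions on $\beta$.
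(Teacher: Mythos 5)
There is a genuine gap, and it sits exactly where you expect it: the uniform bound $\barga(s)-\barga(u)\le \eta\bigl(K(s,u)+K(s,\barsm)\bigr)+\beta|m'|\bigl(1+4\sqrt{L_{m'}}\bigr)^2+R$ over all $u\in\mathcal{S}_{m'}$ and all $m'\in\M_n$ is asserted, not proved, and it does not follow from the exponential inequalities of Section \ref{IntermediateResults}. Those inequalities control $Y_J$ and the chi-square statistic $\chi^2_{m'}$ of (\ref{def-chi}), i.e.\ quantities attached to the specific estimator $\hatsmp$, not a supremum of the centred contrast over the whole (unbounded) parameter class $\mathcal{S}_{m'}$. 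Worse, with a small coefficient $\eta$ in front of $K(s,u)$ your display would yield, after plugging $u=\hatsmh$, a Kullback--Leibler oracle inequality on the event $\Omega$; this is strictly stronger than what the Castellan-type machinery delivers. The single-$u$ bound actually available (Proposition \ref{s-u}) carries coefficient $1$ on $K(s,u)$ and gains only the subtraction of $2h^2(s,u)$; this is why the paper does not treat $\barga(s)-\barga(\hatsmp)$ as one empirical process but splits it further into $\barga(s)-\barga(\barsmp)$ (fixed-function, Proposition \ref{s-u}) plus $\barga(\barsmp)-\barga(\hatsmp)$, the latter being written explicitly in terms of the $\bar Y_J$, bounded by $\sqrt{\chi^2_{m'}}\sqrt{V^2_{m'}}$ via Cauchy--Schwarz, and absorbed into $\tfrac{1}{1+\varepsilon}K(\barsmp,\hatsmp)$ plus a penalty-sized term through the links (\ref{lienKV2})--(\ref{link:V2Chi2}) valid on $\Omega_{m_f}(\varepsilon)$ and the Bernstein bound of Proposition \ref{cont-chi2}. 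After the Pythagorean identity (\ref{Pythagore}) the $K(s,\barsmh)$ term cancels and what survives on the left is only $2h^2(s,\barsmh)+\tfrac{\varepsilon}{1+\varepsilon}K(\barsmh,\hatsmh)$, i.e.\ a Hellinger-type quantity: the Hellinger loss is forced by this step, not (as you claim) merely by the need to bound the loss on $\Omega^c$. Your remark that the conditions on $\beta$ come from "Bernstein constants" and that $C_\beta$ arises from optimising $a\theta+b\theta^{-2}$ is likewise not what happens: the cube roots come from solving $\beta=C_2(\varepsilon)$ with $C_2(\varepsilon)=\tfrac12\bigl(\tfrac{1+\varepsilon}{1-\varepsilon}\bigr)^3$ (Poisson) or $\tfrac14(1+\varepsilon)^3$ (negative binomial), which also explains the thresholds $\beta>1/2$ and $\beta>1/4$.

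A second, smaller but real, error is the treatment of $\Omega^c$. In this paper $h^2(s,u)$ is the sum over the $n$ coordinates of the individual squared Hellinger distances (this is what makes Proposition \ref{s-u} and the comparison $K\ge 2h^2$ coordinatewise-consistent), so it is bounded by $n$, not by an absolute constant; your bound $\E[h^2\mathbf{1}_{\Omega^c}]\le 2\P(\Omega^c)$ is therefore unjustified, and since the union-bounded part of your event has probability only of order $\Sigma$ (not decaying in $n$), the crude product $n\cdot\P(\Omega^c)$ would ruin the remainder term. The paper avoids this by separating the polynomially small event $\Omega_{m_f}(\varepsilon)^c$ (where $h^2\le n$ is compensated by $\P\le C n^{-a}$, $a>2$) from the $\xi$-dependent events $\Omega_1(\xi),\Omega_2(\xi)$, whose contribution is handled by integrating the deviation level $\xi$ rather than by multiplying a sup-norm bound by a probability. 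Your treatment of the fixed-$m$ term $\barga(\barsm)-\barga(s)$ (Bernstein with a weighted union bound over $m$) is plausible but also not what is needed: $m$ is fixed there, and the paper controls this term in expectation (Proposition \ref{t2}) using the boundedness of the log-ratios under $\rho_{min}\le\theta_t\le\rho_{max}$ and Cauchy--Schwarz against $\P(\Omega_{m_f}(\varepsilon)^c)^{1/2}$.
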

We note $h^2(s,u)$ the squared Hellinger distance between
distribution $s$ and $u$ and $\barsm$ is the projection of $s$ onto
the collection $\mathcal{S}_m$ according to the Kullback-Leibler
distance. The proof of this Theorem is given in Section
\ref{dem-theo}.
\newline

Denoting $\barsm = \arg \min_{u \in \mathcal{S}_m} K(s,u) $, we have
for $J\in m$ and $t \in J$,
  \begin{eqnarray}\label{sbar}
  \begin{array}{llll}
    \barsm(t) &= \mathcal{P}(\bar{\lambda}_J) \quad &\text{where} \; \bar{\lambda}_J = \dfrac{\sum_{t\in J} \lambda_t}{|J|} &(\mathcal{P})\\
    \barsm(t) &= \mathcal{NB}(p_J, \phi) \quad &\text{where} \;  p_J=\dfrac{|J|}{\sum_{t\in J}1/p_t}.\quad \quad & (\NB)
  \end{array}
  \end{eqnarray}

We remark that the risk of the penalized estimator
$\hat{s}_{\hat{m}}$ is treated in terms of Hellinger distance
instead of the Kullback-Leibler information. This is due to the fact
that the Kullback-Leibler is possibly infinite, and so difficult to
control. It is possible to obtain a risk bound in term of
Kullback-Leibler if we have a uniform control of $||\log
(s/\bar{s}_m)||_{\infty }$ (see \cite{Massart} for more
explanation).

\subsubsection*{Choice of the weights $\{L_m, m \in \M_n \}$.}

The penalty function depends on the family $\M_n$ through the choice
of the weights $L_m$ which satisfy (\ref{weights}). We consider for
$\M_n$ the set of all possible partitions of $[\![1,n]\!] $
constructed on a partition $m_f$ which satisfies, for all segment
$J$ in $m_f$, $|J|\geq \Gamma (\log n)^2$. Classically (see
\cite{BMGaussian}) the weights are chosen as a function of the
dimension of the model $s$, which is here $|m|$. The number of
partitions of $\M_n$ having dimension $D$ being bounded by $n
\choose{D}$, we have
\begin{eqnarray*}
\Sigma & = & \sum_{m \in \M_n} e^{L_m|m|} = \sum_{D=1}^n e^{-L_D D} Card \{m \in \M_n, |m|=D\} \\\
&\leq &  \sum_{D=1}^n {n \choose{D}} e^{-L_D D} \leq   \sum_{D=1}^n \left(\dfrac{en}{D} \right)^D e^{-L_D D}\\
&\leq &  \sum_{D=1}^n  e^{-D \left(L_D-1-\log\left(\dfrac{n}{D}\right) \right)}.\\
\end{eqnarray*}
So with the choice $L_D = 1 + \kappa + \log\left(\dfrac{n}{D}\right) $
with $\kappa>0$, condition (\ref{weights}) is satisfied. Choosing, say $\kappa=0.1$, the penalty
function can be chosen of the form
\begin{eqnarray}
pen(m)=\beta |m|\left(1 +
4\sqrt{1.1+\log{\left(\frac{n}{|m|}\right)}}\right)^2,
\end{eqnarray}
where $\beta$ is a constant to be calibrated. \newline

Integrating this penalty in Theorem \ref{theo} leads to the following control:
\begin{eqnarray} \label{almost-corollary}
      \E\left[h^2(s,\hatsmh) \right] \leq C_1 \inf_{m\in\mathcal{M}_n} \left\{K(s,\barsm) +\beta |m|\left(1 + 4\sqrt{1.1+\log{\left(\frac{n}{|m|}\right)}}\right)^2\right\} + C(\phi,\Gamma,\rho_{min},\rho_{max},\beta,\Sigma)
\end{eqnarray}
The following proposition gives a bound on the Kullback-Leibler risk associated to $\hatsm$:

  \begin{prpstn}  \label{Risk-control}
    Let $m$ be a partition of $\mathcal{M}_n$, $\hatsm$ be the minimum contrast estimator and $\barsm$ be the projection of $s$ given by equations (\ref{shat}) and (\ref{sbar}) respectively.
    Assume that there exists some positive absolute constants $\rho_{min}$, $\rho_{max}$ and $\Gamma$ such that
    $\forall t, \, \rho_{min} \leq \theta_t \leq \rho_{max}$ and
    $|J|\geq \Gamma (\log n)^2$. Then $\forall \varepsilon >0, \forall a>2$

    \begin{eqnarray*}
      K(s,\bar{s}_m)- \dfrac{C_1(\phi,\Gamma,\rho_{min},\rho_{max},\varepsilon,a)}{n^{a/2-\alpha}}+C_2(\varepsilon)|m|\leq \E[K(s,\hat{s}_m)],
    \end{eqnarray*} where $\alpha<1$ is a constant that can be expressed according to $n$, $C_2(\varepsilon)=\dfrac{1}{2} \dfrac{1-\varepsilon}{(1+\varepsilon)^2}$  in the Poisson model $(\mathcal{P})$ and $C_2(\varepsilon)=\rho_{min}^2\dfrac{(1-\varepsilon)^2}{(1+\varepsilon)^4}$ in the negative binomial model $(\NB)$.
  \end{prpstn}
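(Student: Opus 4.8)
The plan is to establish the lower bound on $\E[K(s,\hat s_m)]$ by comparing the random estimator $\hat s_m$ with its deterministic target $\bar s_m$ and controlling the difference on an event of high probability. First I would write, for each segment $J \in m$, the exact expression of $K(s,\hat s_m) - K(s,\bar s_m)$. In the Poisson case $K(s,u) = \sum_t(\mu_t-\lambda_t-\lambda_t\log(\mu_t/\lambda_t))$, so plugging $\hat s_m(t)=\mathcal{P}(\bar Y_J)$ and $\bar s_m(t)=\mathcal{P}(\bar\lambda_J)$ and summing over $t\in J$ gives a telescoping in the linear terms and leaves a term of the form $|J|\bigl(\bar\lambda_J\log(\bar\lambda_J/\bar Y_J)-\bar\lambda_J+\bar Y_J\bigr)$, i.e. $|J|$ times a Bregman-type divergence $\ell(\bar\lambda_J,\bar Y_J)$ between the empirical and the true segment means. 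The same computation in the negative binomial case, using $\hat s_m(t)=\NB(\phi/(\phi+\bar Y_J),\phi)$ against $\bar s_m(t)=\NB(p_J,\phi)$, yields an analogous per-segment remainder controlled from below by a quadratic form in $(\bar Y_J - \E\bar Y_J)$ near the true parameter.

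Next I would take expectations and split according to the event $\Omega_m(\varepsilon)$ on which, for every segment $J\in m$, the empirical mean $\bar Y_J$ stays within a relative factor $(1\pm\varepsilon)$ of its expectation — precisely the kind of event handled by the exponential inequalities announced for Section \ref{IntermediateResults}. On $\Omega_m(\varepsilon)$ a second-order Taylor expansion of the divergence $\ell$ around the true mean, together with $\rho_{min}\le\theta_t\le\rho_{max}$, lower-bounds the per-segment contribution by $C_2(\varepsilon)$ times (a constant in the Poisson model, $\rho_{min}^2$ in the $\NB$ model) so that summing over the $|m|$ segments produces the $C_2(\varepsilon)|m|$ term with the stated constants $C_2(\varepsilon)=\frac12\frac{1-\varepsilon}{(1+\varepsilon)^2}$ and $C_2(\varepsilon)=\rho_{min}^2\frac{(1-\varepsilon)^2}{(1+\varepsilon)^4}$; the $(1\pm\varepsilon)$ powers are exactly what one gets from bounding the variance-like factors $\bar\lambda_J/\bar Y_J$ (resp. the $\NB$ analogue) on that event. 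On the complement $\Omega_m(\varepsilon)^c$ the difference $K(s,\bar s_m)-K(s,\hat s_m)$ is crudely bounded — using $\rho_{min},\rho_{max},\Gamma$ and, in the $\NB$ case, $\phi$ — by a polynomial in $n$, and this is multiplied by $\P(\Omega_m(\varepsilon)^c)$, which the exponential inequality of Section \ref{IntermediateResults} shows to be at most of order $n\exp(-c\,\Gamma(\log n)^2)\le n^{-a/2}$ for the relevant constant $a>2$; the small exponent correction $\alpha<1$ collects the lower-order logarithmic factors, giving the term $C_1/n^{a/2-\alpha}$.

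I expect the main obstacle to be the second step: obtaining a \emph{lower} bound on the per-segment divergence with the precise constants claimed, rather than a convenient upper bound. Lower-bounding $\ell(\bar\lambda_J,\bar Y_J)$ requires controlling it from below on $\Omega_m(\varepsilon)$ by a clean quadratic, which forces one to track the sign and the exact coefficient in the Taylor remainder and to verify that the worst case over $\theta_t\in[\rho_{min},\rho_{max}]$ really produces the factor $\rho_{min}^2$ (resp. $1$) and the stated powers of $(1\pm\varepsilon)$; in the negative binomial model this is heavier because the divergence is a function of $p_J$ and of $\phi$, and one must re-express everything through the means $\bar Y_J$ and keep $\phi$ only inside the constants $C_1$. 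The remaining ingredients — the exact divergence identities, the crude bound on $\Omega_m(\varepsilon)^c$, and the deviation bound for $\P(\Omega_m(\varepsilon)^c)$ — are routine once the exponential inequalities of Section \ref{IntermediateResults} are in hand.
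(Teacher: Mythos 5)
Your first two steps match the paper's: the "telescoping" you describe is exactly the Pythagorean identity $K(s,\hatsm)=K(s,\barsm)+K(\barsm,\hatsm)$ used in the paper, and the paper likewise works on the event $\Omega_{m_f}(\varepsilon)$, where the per-segment divergence is sandwiched between constant multiples of the chi-square term $(Y_J-E_J)^2/E_J$ (this is how the powers of $(1\pm\varepsilon)$, and in the negative binomial case the factor $\rho_{min}^2$ coming from $\phi/(\phi+\bar{Y}_J)\geq \rho_{min}/(1+\varepsilon)$, actually arise). But there is a genuine gap in how you produce the main term $C_2(\varepsilon)|m|$. You claim that on $\Omega_m(\varepsilon)$ the per-segment contribution is lower-bounded by a \emph{constant}, so that summing over segments gives $C_2(\varepsilon)|m|$. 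That pointwise bound is false: on the event, the segment contribution is of order $(Y_J-E_J)^2/E_J$, which can be arbitrarily close to $0$ (e.g. when $\bar{Y}_J\approx\bar{\lambda}_J$), with probability bounded away from zero. The $|m|$ term is not a pointwise statement but an expectation statement: it comes from $\E[\chi^2_m]=|m|$ (Poisson), resp. $\E[\chi^2_m]\geq|m|$ (negative binomial), and one must then write $\E[\chi^2_m\mathbf{1}_{\Omega}]=\E[\chi^2_m]-\E[\chi^2_m\mathbf{1}_{\Omega^C}]$ and control the loss term.

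This also means your accounting of the remainder $C_1/n^{a/2-\alpha}$ is off. It does not come from crudely bounding $K(s,\barsm)-K(s,\hatsm)$ on the complement and multiplying by $\P(\Omega^C)$ (indeed, since $K\geq 0$ the complement part of $K(s,\hatsm)$ can simply be dropped, and $K(s,\barsm)\P(\Omega^C)$ is easy). The delicate term is precisely $\E[\chi^2_m\mathbf{1}_{\Omega^C}]$: the paper bounds $\chi^2_m$ deterministically by $\bigl(\Gamma(\log n)^2\rho_{min}\bigr)^{-1}\bigl(\sum_t(Y_t-E_t)\bigr)^2$ (using the minimal segment length), applies Cauchy--Schwarz with a fourth-moment bound, and uses $\P(\Omega^C)^{1/2}\lesssim n^{-a/2}$; the factor $n/(\log n)^2=n^{\alpha}$ with $\alpha=1-2\log(\log n)/\log n$ is exactly what produces the exponent $a/2-\alpha$ in the statement. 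Without this chi-square moment step your sketch has no mechanism yielding the $C_2(\varepsilon)|m|$ term together with that specific remainder, so the proof as proposed does not go through; the rest (the divergence identities, the comparison of the divergence with $\chi^2_m$ on the event, and the deviation bound for $\P(\Omega^C)$) is consistent with the paper.
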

The proof is given in appendix \ref{risk}. \newline

  Combining proposition \ref{Risk-control} and equation (\ref{almost-corollary}), we obtain the following oracle-type inequality:

  \begin{crllr} \label{corollary}
    Let $\mathcal{M}_n$ be a collection of partitions constructed on a partition $m_f$ such that there exist absolute positive constants $\rho_{min}$, $\rho_{max}$ and $\Gamma$ verifying:
    \begin{itemize}
      \item $\forall t, \rho_{min} \leq \theta_t \leq \rho_{max}$ and
      \item $\forall J \in m_f, |J|\geq \Gamma (\log n)^2$.
    \end{itemize}
    There exists some absolute constant $C$ such that
    \begin{eqnarray*}
      \E\left[h^2(s,\hatsmh) \right] &\leq & C \log(n) \inf_{m\in \M_n} \left\{\E[K(s,\hatsm)]\right\} + C(\phi,\Gamma,\rho_{min},\rho_{max},\beta,\Sigma).
    \end{eqnarray*}
  \end{crllr}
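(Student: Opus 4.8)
The plan is to do no more than combine Theorem~\ref{theo}, in the concrete form~(\ref{almost-corollary}), with Proposition~\ref{Risk-control}: for each fixed $m\in\M_n$ I would replace the quantity $K(s,\barsm)+pen(m)$ on the right-hand side of~(\ref{almost-corollary}) by a constant multiple of $\log(n)\,\E[K(s,\hatsm)]$, up to an additive term depending only on $(\phi,\Gamma,\rho_{min},\rho_{max},\beta,\Sigma)$, and then take the infimum over $m$.

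First I would control the penalty. For $n\geq 3$ and any $m$ with $1\leq|m|\leq n$ one has $1\leq 1.1+\log(n/|m|)\leq 1.1+\log n$, and the elementary inequality $(1+4\sqrt x)^2=1+8\sqrt x+16x\leq 25x$, valid for $x\geq 1$, gives $pen(m)\leq C\,\beta\log(n)\,|m|$ for an absolute constant $C$. Next I would apply Proposition~\ref{Risk-control} with a fixed admissible choice of its parameters, say $\varepsilon=1/2$ and some $a>2$, and write $R_n$ for the remainder term $C_1/n^{a/2-\alpha}$ occurring there. Since $K(s,\barsm)\geq 0$ and $C_2(\varepsilon)\,|m|\geq 0$, the lower bound of Proposition~\ref{Risk-control} yields at once
\[
K(s,\barsm)\ \leq\ \E[K(s,\hatsm)]+R_n
\]
and
\[
|m|\ \leq\ \frac{1}{C_2(\varepsilon)}\big(\E[K(s,\hatsm)]+R_n\big).
\]

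Because $\alpha<1$ and $a>2$ force $a/2-\alpha>a/2-1>0$ uniformly in $n$, both $R_n$ and $\log(n)\,R_n$ are bounded by constants depending only on $(\phi,\Gamma,\rho_{min},\rho_{max})$. Combining the two displays (the second one multiplied by $C\beta\log n$, via the first step) and absorbing these bounded residuals into an additive constant $C_3=C_3(\phi,\Gamma,\rho_{min},\rho_{max},\beta)$, I obtain, for every $m\in\M_n$,
\[
K(s,\barsm)+pen(m)\ \leq\ \Big(1+\frac{C\beta\log n}{C_2(\varepsilon)}\Big)\,\E[K(s,\hatsm)]+C_3 .
\]
Since the prefactor is at most $C_4\log n$ for $n\geq 3$ and $C_3$ does not depend on $m$, taking $\inf_{m\in\M_n}$ and substituting into~(\ref{almost-corollary}) gives
\[
\E\!\left[h^2(s,\hatsmh)\right]\ \leq\ C\,\log(n)\,\inf_{m\in\M_n}\E[K(s,\hatsm)]+C(\phi,\Gamma,\rho_{min},\rho_{max},\beta,\Sigma),
\]
which is the announced inequality.

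Once Theorem~\ref{theo} and Proposition~\ref{Risk-control} are granted, the argument is essentially bookkeeping. The two points that need a little care are the elementary bound $(1+4\sqrt x)^2\lesssim x$ on $x\geq 1$, which is what makes $pen(m)$ genuinely of order $\log(n)\,|m|$ and no larger, and the verification that the residual $\log(n)\,R_n$ coming from Proposition~\ref{Risk-control} stays bounded in $n$ — exactly the role of the hypotheses $a>2$ and $\alpha<1$, which keep $a/2-\alpha$ bounded away from $0$. No genuine difficulty is expected beyond these.
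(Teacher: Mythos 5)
Your argument is exactly the paper's: the corollary is obtained there by combining Proposition \ref{Risk-control} with the bound (\ref{almost-corollary}), and your bookkeeping (bounding $pen(m)$ by $C\beta\log(n)|m|$, then using the proposition's lower bound to replace $K(s,\barsm)$ and $|m|$ by $\E[K(s,\hatsm)]$ plus a vanishing remainder absorbed, together with its $\log n$ multiple, into the additive constant) fills in the details correctly. No gap; this is essentially the same proof.
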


\section{Exponential bounds}\label{IntermediateResults}

In order to prove Theorem \ref{theo}, the general procedure in this
model selection framework (see for example \cite{BMGaussian}) is the
following: by definitions of $\hat{m}$ and $\hatsm$ (see definition
\ref{def-hatsm} and equation (\ref{esti})), we have, $\forall m \in
\mathcal{M}_n$
\begin{eqnarray*}
      \gamma(\hatsmh) + pen(\hat{m}) \leq \gamma(\hatsm) + pen(m) \leq \gamma(\barsm) + pen(m).
    \end{eqnarray*}
   Then, with  $\barga(u) = \gamma(u) -\E[\gamma(u)] $,
    \begin{eqnarray*}
      K(s,\hatsmh) \leq K(s,\barsm) + \barga(\barsm) -\barga(\hatsmh) -pen(\mh) +pen(m).
    \end{eqnarray*}
The idea is therefore to control $\barga(\barsm)
-\barga(\hat{s}_{m'})$ uniformly over $m' \in \mathcal{M}_n$. This is more complicated when dealing with different models $m$ and $m'$. Thus, following the work of \cite{Cast00}
(see proof of Theorem 3.2, also recalled in \cite{Massart}), we
propose the following decomposition
    \begin{eqnarray} \label{decomposition}
      \barga(\barsm) -\barga(\hat{s}_{m'}) = \left(\barga(\bar{s}_{m'}) -\barga(\hat{s}_{m'})\right) + \left(\barga(s) - \barga(\bar{s}_{m'})\right) + \left(\barga(\barsm)
      -\barga(s)\right),
    \end{eqnarray}
and control each term separately. The first term is the most
delicate to handle, and requires the introduction and the control of
a chi-square statistic. The main difficulty here is the non-bounded
characteristic of the objects we are dealing with. Indeed, in the
classic density estimation context such as that of \cite{Cast00},
the objects are probabilities which are bounded and so facilitate
the direct use of concentration
inequalities. \\
In our case, the chi-square statistic we introduce is denoted
$\chi^2_m$ and defined by
\begin{eqnarray}    \label{def-chi}
\chi^2_m=\chi^2(\bar{s}_m,\hat{s}_m)= \sum_{J\in m} |J|
\dfrac{(\bar{Y}_J-\bar{E}_J)^2}{\bar{E}_J},
\end{eqnarray}
where we recall that $\bar{Y}_J = \dfrac{\sum_{t\in J} Y_t}{|J|}$
and use the notation $\bar{E}_J=\frac{E_J}{|J|}$ with
$E_J=\sum_{t\in J} E_t$. Respectively for $(\mathcal{P})$ and
$(\NB)$, we have $E_t=\lambda_t$ and $E_t=\phi\frac{1-p_t}{p_t}$.
The purpose is thus to control $\chi^2_m$ uniformly over
$\mathcal{M}_n$. To this effect, we need to obtain an exponential
bound of $Y_J=\sum_{t\in J} Y_t$ around its expectation. In
Subsection \ref{ControlYJ}, we recall a result of \cite{BaraudBirge}
that we use to derive an exponential bound for $\chi^2_m$
(Subsection \ref{ControlChi2}).

\subsection{Control of $Y_J$} \label{ControlYJ}

First we recall a large deviation results established by
\cite{BaraudBirge} (lemma 3) that we apply in the Poisson and
negative binomial frameworks.
\begin{lmm}
Let $Y_1,\dots,Y_n$ be $n$ independent centered random variables.

If $\log(\E[e^{zY_i}])\leq \kappa\dfrac{z^2\theta_i}{2(1-z\tau)} $
for all $z\in [0,1/\tau[$, and $1\leq i\leq n$, then
\begin{eqnarray*}
\mathbf{P}\left[\sum_{i=1}^n Y_i \geq \left(2\kappa x\sum_{i=1}^n
\theta_i\right)^{1/2} +\tau x\right]\leq e^{-x} \ \text{for all }
x>0.
\end{eqnarray*}
If for $1\leq i\leq n$ and all $z>0$ $\log(\E[e^{-zY_i}])\leq \kappa
z^2\theta_i/2$, then
\begin{eqnarray*}
\mathbf{P}\left[\sum_{i=1}^n Y_i \leq -\left(2\kappa x\sum_{i=1}^n
\theta_i\right)^{1/2} \right]\leq e^{-x} \ \text{for all } x>0.
\end{eqnarray*}
\end{lmm}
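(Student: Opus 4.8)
The plan is to establish both inequalities by the Cram\'er--Chernoff method: control the Laplace transform of the sum using independence, apply the exponential Markov inequality, and optimise the free parameter. Set $S=\sum_{i=1}^n Y_i$ and $V=\sum_{i=1}^n\theta_i$. By independence, $\E[e^{zS}]=\prod_{i=1}^n\E[e^{zY_i}]$, so the first hypothesis gives $\log\E[e^{zS}]\leq\kappa z^2V/(2(1-z\tau))$ for all $z\in[0,1/\tau)$, and the second gives $\log\E[e^{-zS}]\leq\kappa z^2V/2$ for all $z>0$.

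I would first dispose of the lower tail, which is the purely sub-Gaussian case. For $z>0$ and $t>0$, Markov's inequality applied to $e^{-zS}$ gives $\P[S\leq -t]\leq e^{-zt}\E[e^{-zS}]\leq\exp(-zt+\kappa z^2V/2)$. Minimising the exponent over $z>0$ yields $z=t/(\kappa V)$ and the Gaussian bound $\exp(-t^2/(2\kappa V))$; taking $t=\sqrt{2\kappa V x}$ makes the right-hand side equal to $e^{-x}$, which is the second assertion.

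For the upper tail, Markov's inequality gives, for every $z\in[0,1/\tau)$, $\P[S\geq t]\leq\exp\bigl(-zt+\kappa z^2V/(2(1-z\tau))\bigr)$, and hence $\P[S\geq t]\leq e^{-\psi^*(t)}$, where $\psi^*$ denotes the Legendre--Fenchel transform of the convex function $\psi(z)=\kappa V z^2/(2(1-z\tau))$ on $[0,1/\tau)$. The computation to carry out is the Bennett/Bernstein identity $\psi^*(t)=\frac{\kappa V}{\tau^2}h\!\left(\frac{\tau t}{\kappa V}\right)$ with $h(u)=1+u-\sqrt{1+2u}$: substituting $w=\sqrt{1+2u}$ turns $h(u)$ into $(w-1)^2/2$, so the equation $\psi^*(t)=x$ reduces to $(w-1)^2=2\tau^2x/(\kappa V)$ and, with $u=\tau t/(\kappa V)$, unwinds to $t=\sqrt{2\kappa V x}+\tau x$, giving $\P[S\geq\sqrt{2\kappa V x}+\tau x]\leq e^{-x}$. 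Alternatively one may avoid the conjugate and simply substitute the explicit candidate $z=\tau^{-1}\bigl(1-(1+\tau\sqrt{2x/(\kappa V)})^{-1}\bigr)$ into the exponent, checking directly that its value is $\leq -x$.

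The only points requiring care are the convexity and monotonicity of $\psi$ on $[0,1/\tau)$ --- needed to legitimate the conjugacy identity and the monotone inversion --- together with the check that the optimal $z$ remains in $[0,1/\tau)$; both are elementary. I do not expect a deeper obstacle here, precisely because the hypotheses are already stated in log-Laplace form: in applications of this lemma --- carried out later in the paper for $Y_J=\sum_{t\in J}Y_t$ in the Poisson and negative binomial settings --- the delicate part is establishing those log-Laplace bounds, not running the Chernoff argument above.
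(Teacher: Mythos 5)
Your argument is correct. Note, however, that the paper does not prove this lemma at all: it is recalled verbatim as Lemma~3 of Baraud and Birg\'e, and the citation is the ``proof''. What you have written is essentially the standard Cram\'er--Chernoff derivation underlying that reference: the sub-Gaussian optimisation for the lower tail is immediate, and for the upper tail both of your routes work --- the Legendre-transform identity $\psi^*(t)=\frac{\kappa V}{\tau^2}h\bigl(\tfrac{\tau t}{\kappa V}\bigr)$ with $h(u)=1+u-\sqrt{1+2u}$ inverts exactly to $t=\sqrt{2\kappa Vx}+\tau x$, and the direct substitution $z=\tau^{-1}\bigl(1-(1+\tau\sqrt{2x/(\kappa V)})^{-1}\bigr)$, which indeed lies in $[0,1/\tau)$, makes the exponent equal to $-x$ exactly. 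So your proposal correctly fills in a proof the paper delegates to the literature, by the same method that the cited source uses.
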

To apply this lemma we therefore need a majoration of
$\log{\E\left[e^{z(Y_t-E_t)}\right]}$ and
$\log{\E\left[e^{-z(Y_t-E_t) }\right]}$ for $z>0$.

\subsubsection*{Poisson case.}  With $E_t=\lambda_t$,  we have:
\begin{eqnarray*}
\log{\E\left[e^{z(Y_t-\lambda_t) }\right]} &=& - z
\lambda_t+\log{\E\left[e^{z Y_J}\right]} = - z
\lambda_t+\log{e^{(\lambda_t(e^{z}-1))}} = \lambda_t (e^{z}-z-1).
\end{eqnarray*}
So
\begin{eqnarray*}
\log{\E\left[e^{z(Y_t-E_t) }\right]} = E_t (e^{z}-z-1).
\end{eqnarray*}

\subsubsection*{Negative binomial case.} In this case $E_t=
\phi\frac{1-p_t}{p_t}$ and we have
\begin{eqnarray*}
  \log \E\left(e^{z\left(Y_t-\phi\frac{1-p_t}{p_t} \right)} \right)&=& -z \phi \frac{1-p_t}{p_t}+\phi\log\frac{p_t}{1-(1-p_t)e^z} \, \text{for }z\leq -\log(1-p_t)\\
  &\leq& \phi\left[ \frac{1-p_t}{p_t}(-z)+  \frac{1-p_t}{p_t} \frac{p_t}{1-(1-p_t)e^z}(e^z-1) \right]\\
  &\leq& \phi\left[ \frac{1-p_t}{p_t}(-z)+  \frac{1-p_t}{p_t} (e^z-1) \right] \leq \phi \frac{1-p_t}{p_t}(e^z-z-1).\\
\end{eqnarray*}
So that in both cases,
\begin{eqnarray*}
\log{\E\left[e^{z(Y_t-E_t) }\right]} \leq E_t (e^{z}-z-1).
\end{eqnarray*}

Now using $e^{z}-z-1 \leq \frac{ z^2}{ 2(1-z)}$ for $z>0$ and
$e^{z}-z-1 \leq \frac{z^2}{2}$ for $z<0$, we have
\begin{eqnarray*}
\log{\E\left[e^{z(Y_t-E_t) }\right]} \leq  E_t\frac{z^2}{2(1-z)}
\quad \text{and} \quad \log{\E\left[e^{-z(Y_t-E_t) }\right]} \leq
E_t\frac{z^2}{2}
\end{eqnarray*}

Then,
\begin{equation*}
P\left[Y_J-E_J\geq \sqrt{2x E_J}+x \right] \leq e^{-x},
\end{equation*}

or
\begin{equation} \label{ControlY-lambda}
P\left[Y_J-E_J\geq x \right] \leq e^{-\frac{x^2}{2(E_J+x)}}  \ \
\text{and} \ \ P\left[|Y_J-E_J|\geq x \right] \leq  2
e^{-\frac{x^2}{2(E_J+x)}}
\end{equation}

\subsection{Exponential bound for $\chi^2_m$} \label{ControlChi2}

We first introduce the following set $\Omega_m$ defined by:
\begin{eqnarray}    \label{def-Omega}
\Omega_m(\varepsilon) =\d \bigcap_{J \in m} \left\{\left|\dfrac{Y_J}{E_J}-1
\right|\leq \varepsilon \right\},
\end{eqnarray}
for all $\varepsilon \in ]0,1[$ and all segmentations $m$ such that
each segment $J$ verifies $|J|\geq \Gamma(\log(n))^2$. This set
has a large probability since we obtain
\begin{eqnarray*}
\P(\Omega_m(\varepsilon)^C)&\leq&\sum_{J\in m}\P\left(\left|Y_J -E_J \right|>\varepsilon E_J \right) \leq 2 \sum_{J \in m}e^{-\frac{\varepsilon^2 E_J}{2(1+\varepsilon)}} \\
&\leq & 2 \sum_{J \in m} e^{-|J|\varepsilon'f(\phi,\rho_{min}) }
\leq 2|m|\exp({-\varepsilon' \Gamma f(\phi,\rho_{min})(\log(n))^2})
    \end{eqnarray*}
by applying equation (\ref{ControlY-lambda}) with $x=\varepsilon
E_J$ and where $\varepsilon'=\varepsilon^2/(2 (1+\varepsilon))$ and
$f(\phi,\rho_{min})>0 $. Thus
\begin{eqnarray} \label{ControlOmega}
      \P(\Omega_m(\varepsilon)^C)&\leq&\dfrac{C(\phi,\Gamma,\rho_{min},\varepsilon,a)}{n^a},
    \end{eqnarray}
with $a>2$. \\
The reason for introducing this set is double: in addition to enable the
 control of $\chi^2_m$ given by equation (\ref{def-chi}) on
this restricted set, it allows us to link $K(\hat{s}_m,\bar{s}_m)$
to $V^2_m$ (see (\ref{lienKV2}) for the control of the first term in
the decomposition) and so to $\chi^2_m$, relation that we use to
evaluate the risk of one model (see (\ref{link:V2Chi2})). \\

Let $m_f$ be a partition of $\mathcal{M}_n$ such that $\forall J \in
m_f, |J|\geq \Gamma (\log(n))^2$ and assume that all considered
partitions in $\mathcal{M}_n$ are constructed on this grid $m_f$.
The following proposition gives an exponential bound for
$\chi^2_{m}$ on the restricted event $\Omf$.
\begin{prpstn} \label{cont-chi2}
Let $Y_1,\ldots,Y_n$ be independent random variables with
 distribution $\mathcal{G}$ (Poisson or negative binomial
distribution). Let $m$ be a partition of $\mathcal{M}_n$ with $|m|$
segments and $\chi^2_{m}$ the statistic given by (\ref{def-chi}).
For any positive $x$, we have
\begin{eqnarray*}
\P\left[ \chi^2_{m} \mathbf{1}_{\Omf} \geq |m| + 8 (1+\varepsilon)
\sqrt{x |{m}|} +4 (1+\varepsilon) x\right] &\leq &
        e^{-x}.
\end{eqnarray*}
\end{prpstn}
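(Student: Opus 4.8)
The plan is to pass to the high‑probability event, use it to replace the chi‑square statistic by a sum of \emph{independent, bounded} nonnegative blocks, and then run a Cram\'er--Chernoff argument on that sum. Throughout write $W_J=Y_J-E_J$ and note that, since $\bar Y_J=Y_J/|J|$ and $\bar E_J=E_J/|J|$, the statistic (\ref{def-chi}) is $\chi^2_m=\sum_{J\in m}W_J^2/E_J$. For Step~1, recall that every partition of $\mathcal{M}_n$ is built on the grid $m_f$, so each $J\in m$ is a disjoint union of segments $J'\in m_f$; hence $W_J=\sum_{J'\subseteq J}W_{J'}$, and on $\Omega_{m_f}(\varepsilon)$ the triangle inequality gives $|W_J|\le\varepsilon E_J$ simultaneously for all $J\in m$. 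Setting $U_J=\dfrac{W_J^2}{E_J}\,\mathbf 1_{\{|W_J|\le\varepsilon E_J\}}$, this yields
\[
\chi^2_m\,\mathbf 1_{\Omega_{m_f}(\varepsilon)}\;\le\;\sum_{J\in m}U_J ,
\]
and the $(U_J)_{J\in m}$ are independent since they depend on disjoint blocks of the $Y_t$'s. It therefore suffices to prove the stated tail bound for $\sum_{J\in m}U_J$.

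Step~2 is the heart of the matter: controlling the Laplace transform of one block. The truncation is exactly what keeps $\E[e^{\lambda U_J}]$ finite: for $u>0$ the event $\{U_J>u\}$ forces $\sqrt{uE_J}\le|W_J|\le\varepsilon E_J$, which is empty once $u\ge\varepsilon^2E_J$, while for $u<\varepsilon^2E_J$ one applies the deviation bound (\ref{ControlY-lambda}) with $x=\sqrt{uE_J}$ and uses $\sqrt{u/E_J}<\varepsilon$ to obtain
\[
\P\big[U_J>u\big]\le\P\big[|W_J|\ge\sqrt{uE_J}\,\big]\le 2\,e^{-u/(2(1+\varepsilon))}.
\]
Combining this with the first‑moment bound $\E[U_J]\le\E[W_J^2]/E_J=\mathrm{Var}(Y_J)/E_J\le1$ (equality in the Poisson case; the inequality is the second‑order consequence of the moment‑generating bounds of Section~\ref{ControlYJ}), and writing $\E[e^{\lambda U_J}]=1+\lambda\int_0^\infty e^{\lambda u}\,\P[U_J>u]\,du$ with $e^{\lambda u}=1+(e^{\lambda u}-1)$ --- the "$1$" part integrating to $\E[U_J]\le1$ and the "$e^{\lambda u}-1$" part estimated by the exponential tail --- one gets, for $0<\lambda<\frac{1}{2(1+\varepsilon)}$,
\[
\log\E\big[e^{\lambda U_J}\big]\le\lambda+\frac{8(1+\varepsilon)^2\lambda^2}{1-2(1+\varepsilon)\lambda}.
\]

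For Step~3, by independence $\log\E\big[e^{\lambda(\sum_{J}U_J-|m|)}\big]\le\dfrac{v\lambda^2}{2(1-c\lambda)}$ with $v=16(1+\varepsilon)^2|m|$ and $c=2(1+\varepsilon)$; this is the sub‑gamma regime, and the standard optimization in $\lambda$ (the Bernstein‑type deviation inequality for sub‑gamma variables) gives
\[
\P\Big[\textstyle\sum_{J\in m}U_J\ge|m|+\sqrt{2vx}+cx\Big]
=\P\Big[\textstyle\sum_{J\in m}U_J\ge|m|+4\sqrt2\,(1+\varepsilon)\sqrt{|m|\,x}+2(1+\varepsilon)x\Big]\le e^{-x}.
\]
Since $4\sqrt2<8$ and $2<4$, this threshold is at most $|m|+8(1+\varepsilon)\sqrt{x|m|}+4(1+\varepsilon)x$, which together with Step~1 yields the proposition (in fact with slightly sharper constants than stated).

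I expect Step~2 to be the only delicate point. Without the indicator $\mathbf 1_{\{|W_J|\le\varepsilon E_J\}}$ the quantity $\E[e^{\lambda W_J^2/E_J}]$ is infinite for every $\lambda>0$ (the Poisson, hence a fortiori the negative binomial, upper tail is too heavy), so restricting to $\Omega_{m_f}(\varepsilon)$ is not a technical convenience but is what makes the Chernoff method applicable at all; and securing the leading coefficient $1$ in front of $|m|$ --- rather than something of order $2(1+\varepsilon)$ --- forces one to feed in the first‑moment bound $\E[U_J]\le1$ on top of the exponential tail, instead of merely integrating the tail estimate.
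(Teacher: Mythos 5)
Your overall strategy is the same as the paper's: use the event $\Omf$ to truncate each block at $|Y_J-E_J|\le\varepsilon E_J$, invoke the deviation bound (\ref{ControlY-lambda}) only in the range $x\le\varepsilon E_J$, and then run a Bernstein-type argument on the sum of truncated blocks. The only methodological difference is that you bound the log-Laplace transform of each truncated block directly from its exponential tail, whereas the paper bounds the moments $\E[Z_J^p\mathbf{1}_{\Omf}]$ for all $p\ge2$ and invokes Bernstein's inequality in the form of Proposition 2.9 of \cite{Massart} with $v=2^5(1+\varepsilon)^2|m|$ and $c=4(1+\varepsilon)$; your route even yields slightly better constants, and your replacement of the global indicator $\mathbf{1}_{\Omf}$ by the per-segment truncation $U_J=\frac{(Y_J-E_J)^2}{E_J}\mathbf{1}_{\{|Y_J-E_J|\le\varepsilon E_J\}}$ is a cleaner way to secure the independence that Bernstein's inequality requires.

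There is, however, one concrete flaw in your Step~2: the first-moment bound $\E[U_J]\le \mathrm{Var}(Y_J)/E_J\le 1$ is true (with equality before truncation) only in the Poisson case. For the negative binomial, $\mathrm{Var}(Y_t)/E_t=1/p_t>1$, so $\E[Z_J]$ is a weighted average of the $1/p_t$ and lies between $1$ and $1/\rho_{min}$ --- exactly what the paper's own proof records when it states $|m|\le\E[\chi^2_m]\le\frac{1}{\rho_{min}}|m|$ in that case. Consequently your argument justifies the centering at $|m|$ only for $(\mathcal{P})$; for $(\NB)$ it gives a tail bound centered at $\sum_J\E[U_J]$, which can be as large as $|m|/\rho_{min}$. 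Your parenthetical justification via the moment-generating bounds of Section \ref{ControlYJ} should itself have been a warning sign: a bound $\log\E[e^{z(Y_t-E_t)}]\le E_t(e^z-z-1)$ valid near $z=0^+$ would force $\mathrm{Var}(Y_t)\le E_t$, which is false for the negative binomial, so that inequality cannot be used to import a variance bound. To be fair, the paper's own proof has the same unaddressed step (Bernstein's inequality is centered at the expectation of the truncated sum, and the paper passes from that expectation to $|m|$ without comment in the $(\NB)$ case); a repaired statement would either center at $\E[\chi^2_m]$ or carry a factor $1/\rho_{min}$ in front of $|m|$ for the negative binomial.
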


\begin{proof}
As in the density estimation framework, this quantity can be
controlled using the Bernstein inequality. In
our context, noting $\chi^2_m=\sum_{J\in m} Z_J$ where
$$
Z_J=\frac{(Y_{J}-E_{J})^2}{E_{J}},
$$
we need
\begin{itemize}
\renewcommand{\labelitemi}{$\bullet$}
    \item the calculation (or bounds) of the expectation of $\chi^2_m$:

\subsubsection*{Poisson case}
$Y_{J}$ is distributed according to a Poisson distribution with parameter $\lambda_{J}$ so that
        \begin{eqnarray}
            \E\left[\chi^2_m\right] = |m|.
        \end{eqnarray}
\subsubsection*{Negative binomial case}  We have
        \begin{eqnarray*}
            \E\left[\chi^2_m\right]=\sum_{J \in m} \frac{1}{|J|} \dfrac{\sum_{t\in J}Var(Y_t)}{\phi \frac{1-p_J}{p_J}} = \sum_{J \in m} \frac{1}{|J|}
\dfrac{\sum_{t\in J}\phi
\frac{1-p_t}{p_t^2}}{\phi\frac{1-p_J}{p_J}},
        \end{eqnarray*}
        and thus
        \begin{eqnarray}
            |m|\leq \E\left[\chi^2_m\right] \leq \frac{1}{\rho_{min}}|m|.
        \end{eqnarray}

    \item an upper bound of $\sum_{J\in m} \E[Z_J^p]$. For every $p\geq 2$ we have,
    \begin{eqnarray*}
        \E\left[ Z_J^{p} \mathbf{1}_{\Omf}  \right] &=&\frac{1}{E_J^{p}} \int_{0}^{+\infty } 2p \  x^{2p-1}P\left[ \left\{ |Y_J-E_J|\geq x\right\} \cap \Omf \right] dx \\
        &\leq &\frac{1}{E_J^{p}} \int_{0}^{\varepsilon E_J} 2p \ x^{2p-1}P\left[|Y_J-E_J|\geq x \right] dx\text{.}
    \end{eqnarray*}
\end{itemize}

Using equation (\ref{ControlY-lambda}) and since $x \leq \varepsilon E_J$, we obtain the exponential
bound $P\left[|Y_J-E_J | \geq x \right] \leq  2
e^{-\frac{x^2}{2E_J(1+\varepsilon)}}.$

Therefore

\begin{eqnarray*}
\E\left[ Z_J^{p}\mathbf{1}_{\Omf} \right]&\leq &\frac{1}{E_J^{p}} \int_{0}^{\varepsilon E_J} 4p \ x^{2p-1}e^{ -\frac{x^{2}}{2  E_J \left( 1+\varepsilon \right) }}  dx \\
&\leq &4p\left( 1+\varepsilon \right) ^{p}\int_{0}^{+\infty}u^{2p-1}e^{ -\frac{u^{2}}{2}} du \\
&\leq &4p\left( 1+\varepsilon \right)^{p} \int_{0}^{+\infty}\left( 2t\right) ^{p-1}e^{ -t} dt \\
&\leq &2^{p+1}p\left( 1+\varepsilon\right) ^{p}p!,
\end{eqnarray*}%

and

\begin{equation*}
\sum_{J\in m}\E\left[Z_J^{p}\mathbf{1}_{\Omf} \right]  \leq
2^{p+1}p\left( 1+ \varepsilon\right) ^{p}p! |m|\text{.}
\end{equation*}%

Since $p\leq 2^{p-1}$,

\begin{equation*}
\sum_{J\in m}\E\left[ Z_J^{p}\mathbf{1}_{\Omf} \right] \leq
\frac{p!}{2}\times \left[
2^{5}\left(1+\varepsilon\right)^{2}|m|\right] \times \left[
4\left(1+\varepsilon\right) \right]^{p-2}.
\end{equation*}%

We conclude by taking $v=2^{5}\left( 1+\varepsilon\right) ^{2}|m|$
and $c=4\left( 1+\varepsilon\right)$ (see proposition 2.9 of
\cite{Massart} for the definition of the Bernstein's inequality).

\end{proof}
\section{Simulations and application}\label{Appli}

In the context of RNA-seq experiments, an important question is the
(re)-annotation of the genome, that is, the precise localisation of
the transcribed regions on the chromosomes. In an ideal situation,
when considering the number of reads starting at each position, one
would expect to observe a uniform coverage over each gene
(proportional to its expression level), separated by regions of null
signal (corresponding to non-transcribed regions of the genome). In
practice however, those experiments tend to return very noisy
signals that are best modelled by the negative binomial
distribution. \\

In this Section, we first study the performance of the proposed
penalized criterion by comparing it with others model selection
criteria on a resampling dataset (Subsection \ref{Simul}). Then we
provide an application on real data (Subsection
\ref{ApplcationRNA-SEQ}). Since the penalty depends on the partition
only through its size, the segmentation procedure is two-steps:
first we estimate, for all number of segments $K$ between $1$ and
$K_{max}$, the optimal partition with $K$ segments (i.e. construct
the collection of estimators $\{\hat{s}_K\}_{1\leq K\leq K_{max}}$
where $\hat{s}_K=\arg\min_{\hatsm, m\in \M_K}\{\gamma(\hatsm) \}$).
The optimal solution is obtained using a fast segmentation algorithm
such as the Pruned Dynamic Programming Algorithm (PDPA,
\cite{rigaill_pruned_2010}) implemented for the Poisson and negative
binomial losses or contrasts in the R package
\texttt{Segmentor3IsBack} \cite{PDPA_implementation}. Then, we
choose $K$ using our penalty function which requires the calibration
of the constant $\beta$ that can be tuned according to the data by
using the slope heuristic (see \cite{BMMinPen,Arl_Mas-pente}). Using
the negative binomial distribution requires the knowledge of
parameter $\phi$. We propose to estimate it using a modified version
of the Jonhson and Kotz's estimator \cite{jonhson_kotz}.

\subsection{Simulation study} \label{Simul}

We have assessed the performances of the proposed method (called Penalized PDPA) on a
simulation scenario by comparing to five other procedures both its
choice in the number of segments and the quality of the obtained
segmentation using the Rand-Index $\mathcal{I}$. This index is
defined as follows: let $C_t$ be the true index of the segment to
which base $t$ belongs and let $\hat{C}_t$ be the corresponding
estimated index, then
$$\mathcal{I}= \dfrac{2\sum_{t>s} \left[ \mathbf{1}_{C_t = C_s}
  \mathbf{1}_{\hat{C}_t = \hat{C}_s} +\mathbf{1}_{C_t \neq
    C_s}\mathbf{1}_{\hat{C}_t \neq \hat{C}_s}\right]}{(n-1)(n-2)}. $$

The characteristics of the different algorithms are described in
Table \ref{algo}.

\begin{table}[h]
\begin{small}
\centering
\begin{tabular}{c|c|c|c|c|c|c|}
Algorithm & Dist & Complexity & Inference & Pen & Exact & Reference \\
\hline
Penalized PDPA & NB & $n\log n$ & frequentist & external & exact & {\footnotesize \cite{PDPA_implementation}}\\
PDPA with BIC &  NB & $n\log n$ & frequentist & external & exact & {\footnotesize \cite{PDPA_implementation}}\\
Penalized PDPA & P & $n\log n$ & frequentist & external & exact & {\footnotesize \cite{PDPA_implementation}}\\
PDPA with BIC &  P & $n\log n$ & frequentist & external & exact & {\footnotesize \cite{PDPA_implementation}}\\
PELT with BIC & P & $n$ & frequentist & internal & exact & {\footnotesize \cite{package_changepoint}}\\
CART with BIC & P & $n\log n$ & frequentist & external & heuristic & {\footnotesize \cite{breiman_cart}}\\
postCP with ICL & NB & $n$ & frequentist & external &  exact & {\footnotesize \cite{luong2012fast}}\\
EBS with ICL & NB & $n^2$ & Bayesian & external & exact & {\footnotesize \cite{rigaill_exact_2011}}\\
\end{tabular}
\end{small}
\caption{{\em Properties of segmentation algorithms.} The first
column indicates the name of the algorithm and the criterion used
for the choice of $K$. In the second column, NB stands for the
negative binomial distribution and P for Poisson. The time of each
algorithm is given (column "Complexity") and column "Exact" precises
if the exact solution is reached.} \label{algo}
\end{table}

The data we considered comes from a resampling procedure using real
RNA-seq data. The original data, from a study by the Sherlock
Genomics laboratory at Stanford University, is publicly available on
the NCBIs Sequence Read Archive (SRA, url:
http://www.ncbi.nlm.nih.gov/sra) with the accession number
SRA048710. We created an artificial gene, inspired from the
\textit{Drosophila} inr-a gene, resulting in a $14$-segment signal
with unregular intensities mimicking a differentially transcribed
gene. $100$ datasets are thus created. Results are presented using
boxplots in Figure \ref{droso}. Because PELT's estimate of $K$
averaged around $427$ segments, we did not show its corresponding
boxplot.
\newline

\begin{figure}[h]
\begin{center}
\includegraphics[width=10cm]{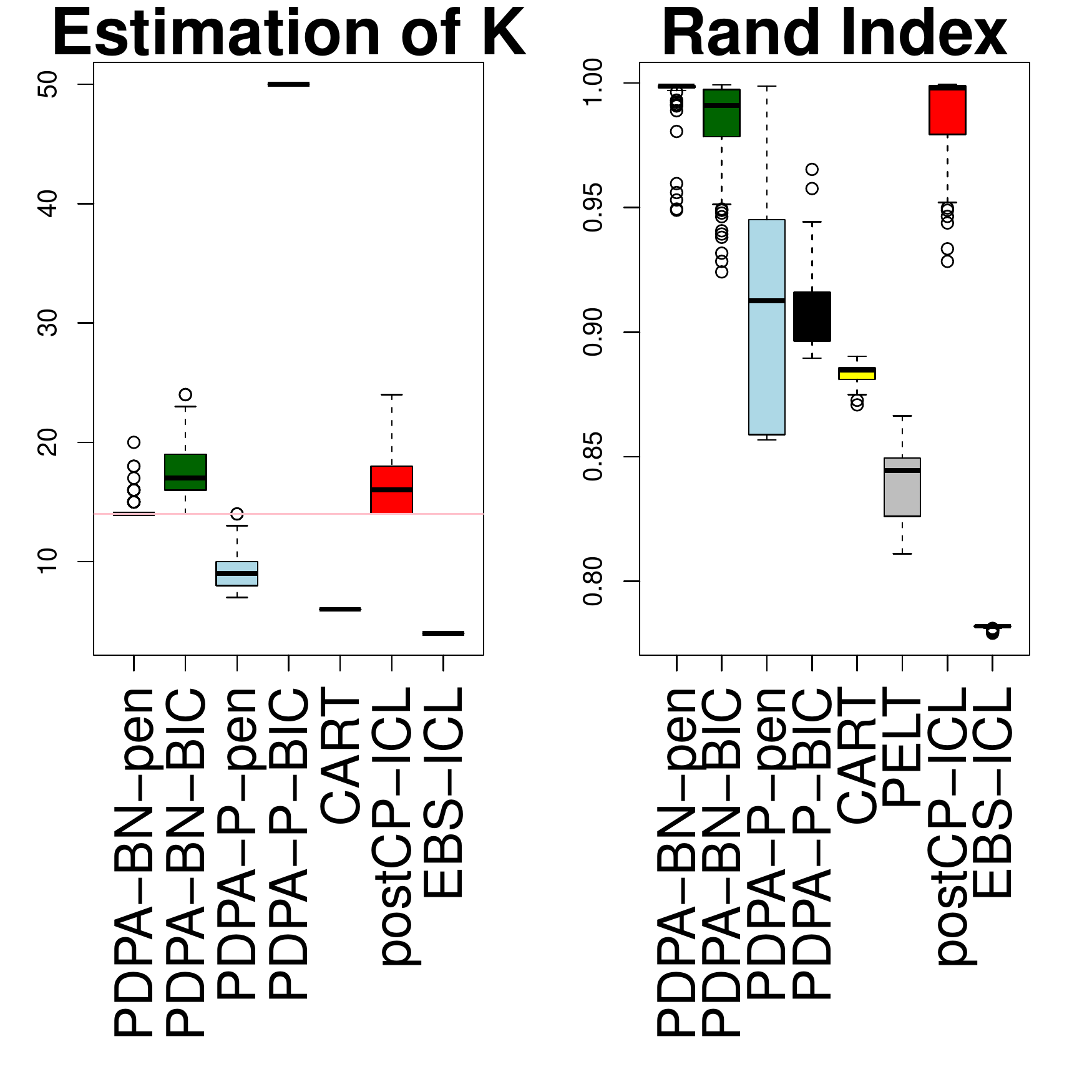}\label{droso}
\end{center}
\caption{{\bf Estimation of K on resampled datasets.}  Left: boxplot of the estimation of $K$ on data-sets simulated by resampling on artificial gene Inr-a. PELT's estimates average at $427$ segments and are not shown. The pink horizontal line indicates the true value of $K$. Right: boxplot of the Rand-Index values for the proposed estimators.}
\end{figure}

We can see that with the negative binomial distribution, not only do
we perfectly recover the true number of segments, but our procedure
outperforms all other approaches. Moreover, the impressive results
in terms of Rand-Index prove that our choice of number of segments
also leads to the almost perfect recovery of the true segmentation.
However, the use of the Poisson loss leads to a constant underestimation of the number of segments, which is reflected on the Rand-Index values. This is due to the inappropriate choice of distribution
(confirmed by the other algorithms implemented for the Poisson loss
which perform worse than the others). It however underlines the need for the development of
methods for the negative binomial distribution. Moreover, in terms
of computational time, the fast algorithm \cite{PDPA_implementation}
is in $\mathcal{O}(n\log n)$, allowing its use on long signals (such
as a whole-genome analysis), even though it is not as fast as CART
or PELT.

\subsection{Segmentation of RNA-Seq data} \label{ApplcationRNA-SEQ}

We apply our proposed procedure for segmenting chromosome $1$ of the
S. Cerevisiae (yeast)  using RNA-Seq data from the Sherlock
Laboratory at Stanford University \cite{Risso_norma} and publicly
available from the NCBI's Sequence Read Archive (SRA,
url:http://www.ncbi.nlm.nih.gov/sra, accession number SRA048710).
An existing annotation is available on the Saccharomyces Genome
Database (SGD) at url:http://www.yeastgenome.org, which allows us
to validate our results. The two distributions (Poisson and negative binomial) are considered here to show the difference. \\

In the Poisson distribution case, we select $106$ segments of which
only $19$ are related to the SGD annotation. Indeed, as illustrated
by Figure \ref{Poisson}, $36$ of the segments have a length smaller
than $10$: the Poisson loss is note adapted to this kind of data
with high variability and it tends to select outliers as segment. On
the contrary, we select $103$ segments in the negative binomial case
most of which (all but $3$) surround known genes from the SGD.  Figure \ref{NegBin}
illustrates the result. However, almost none of those change-points
correspond exactly to annotated boundaries. Discussion with
biologists has increased our belief in the need for genome
(re-)annotation using RNA-seq data, and in  the validity of our
approach.

\begin{figure}[h]
\begin{center}
\includegraphics[width=14cm]{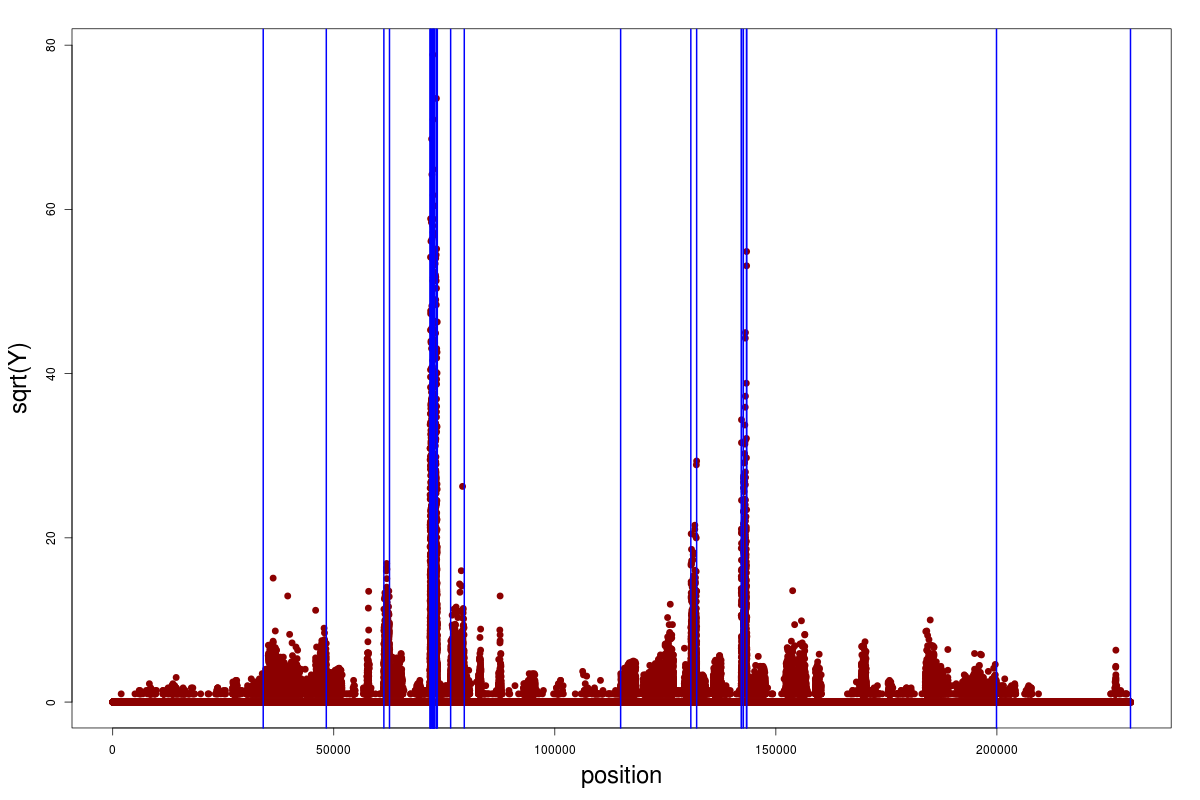}
\end{center}
\caption{\textit{Segmentation of the yeast chromosome 1 using
Poisson loss.} Read-count are represented on a root-squared scale.
The model selection procedure chooses $K=106$
segments.}\label{Poisson}
\end{figure}

\begin{figure}[h]
\begin{center}
\includegraphics[width=14cm]{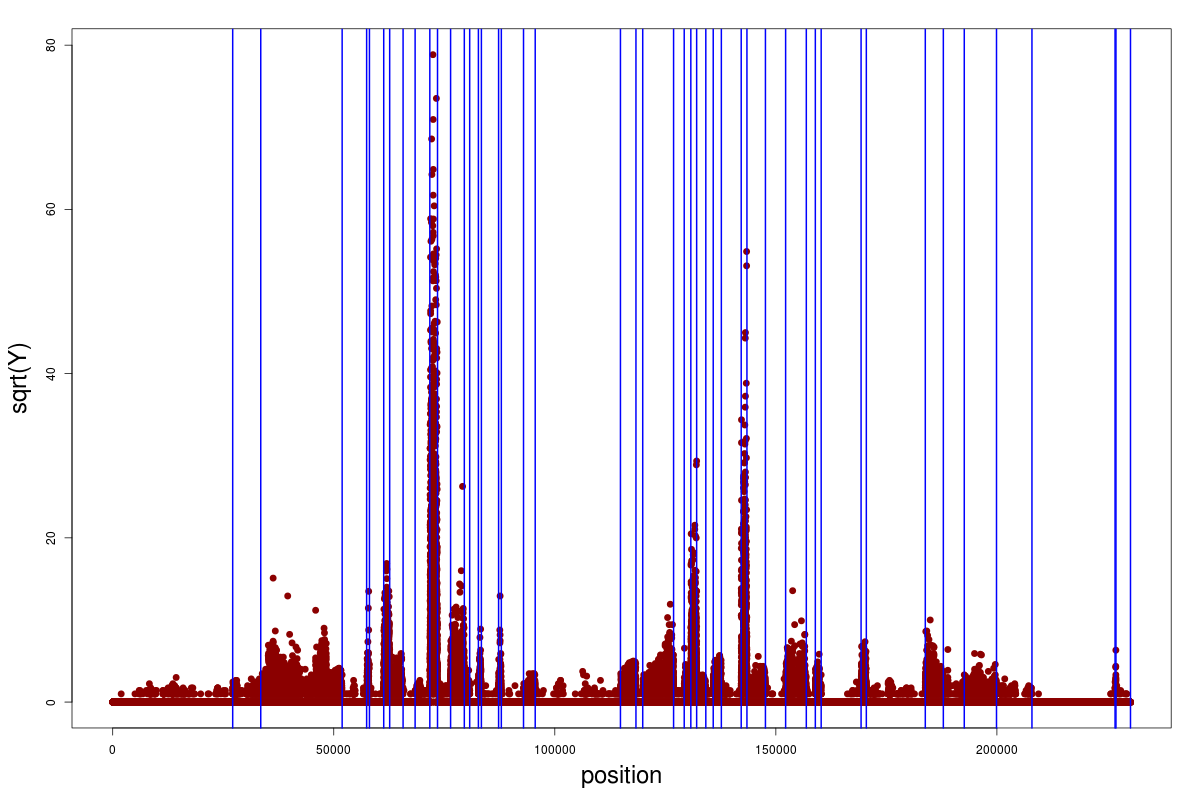}
\end{center}
\caption{\textit{Segmentation of the yeast chromosome 1 using the negative binomial loss.} The model selection procedure chooses $K=103$ segments, most of which surround genes given by the SGD annotation.}\label{NegBin}
\end{figure}

\section{Proof of Theorem \ref{theo} } \label{dem-theo}

Recall that we want to control the three terms in the decomposition
given by (\ref{decomposition}). All the proofs of the different
propositions are given in Section \ref{Appendices}.

    \begin{itemize}
    \renewcommand{\labelitemi}{$\bullet$}
    \item The control the term  $\bar{\gamma}(\hat{s}_{m'})-\bar{\gamma}(\bar{s}_{m'})$ is obtained with the following
    proposition where the set $\OO1$ is defined by
    $$
    \OO1 = \bigcap_{m' \in \M_n}\left\{ \chi^2_{m'} \mathbf{1}_{\Omf} \leq |m'| +8(1+\varepsilon) \sqrt{(L_{m'} |m'|+\xi)|m'|}+ 4(1+\varepsilon)(L_{m'}|m'|+\xi)\right\}.
    $$
    \begin{prpstn} \label{t3}
Let $m'$ be a partition of $\M_n$. Then
      \begin{eqnarray*}
        \left(\bar{\gamma}(\hat{s}_{m'})-\bar{\gamma}(\bar{s}_{m'})\right) \mathbf{1}_{\Omf\cap\OO1} & \leq & C(\varepsilon) \left[|m'| +8(1+\varepsilon) \sqrt{(L_{m'} |m'|+\xi)|m'|} \right.\\
        & & + \left. 4(1+\varepsilon)(L_{m'}|m'|+\xi)  \right] + \dfrac{1}{1+\varepsilon}K(\bar{s}_{m'},\hat{s}_{m'}),
      \end{eqnarray*}
      with $C(\varepsilon)=\frac{1}{2} \left ( \frac{1+\varepsilon}{1-\varepsilon} \right )$ in the Poisson case and $C(\varepsilon)=\frac{1+\varepsilon}{4}$ in the negative binomial case.
    \end{prpstn}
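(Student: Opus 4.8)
The plan is to turn this into a purely pathwise inequality: once we are on $\Omf\cap\OO1$ there is nothing probabilistic left, all the concentration having been spent in building $\OO1$ through Proposition~\ref{cont-chi2}. I would proceed in three moves — (i) an exact rewriting of the fluctuation term as a sum over the segments of $m'$, (ii) a scalar inequality used on each segment, (iii) substitution of the defining bound of $\OO1$. It is convenient to work with the nonnegative quantity $\barga(\barsmp)-\barga(\hatsmp)$, i.e. the first term of the decomposition (\ref{decomposition}) (up to the harmless exchange of the two arguments), and to bound it from above by the announced right-hand side.

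For (i): writing $\barga(u)=\gamma(u)-\E[\gamma(u)]$ and evaluating $\gamma$ and $\E[\gamma(\cdot)]$ at the two piecewise-constant distributions $\hatsmp$ and $\barsmp$, whose parameters on a segment $J\in m'$ are respectively $\bar Y_J$ and $\bar{E}_J$ (see (\ref{shat})--(\ref{sbar})), every sum collapses to a sum over $J\in m'$. Using that $\hatsmp$ minimizes $\gamma$ on $\S_{m'}$ and that $\barsmp$ is the information projection of $s$ onto $\S_{m'}$ — hence the Pythagorean identity $K(s,\hatsmp)=K(s,\barsmp)+K(\barsmp,\hatsmp)$ — one obtains, in model $(\mathcal P)$ and with $x_J:=\bar Y_J/\bar{E}_J-1$ (so that $\bar{E}_J=\bar\lambda_J$ there),
\[
\barga(\barsmp)-\barga(\hatsmp)=\sum_{J\in m'}|J|\,\bar\lambda_J\,x_J\log(1+x_J),
\]
together with $\chi^2_{m'}=\sum_{J\in m'}|J|\,\bar\lambda_J\,x_J^2$ and $K(\barsmp,\hatsmp)=\sum_{J\in m'}|J|\,\bar\lambda_J\bigl(x_J-\log(1+x_J)\bigr)$. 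The same computation from the negative binomial log-likelihood, with $\bar{E}_J=\phi(1-p_J)/p_J$, produces the analogous segment-wise identities at the cost of heavier algebra.

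For (ii): on $\Omf$ one has $|x_J|\le\varepsilon$ for every $J\in m'$, since $\Omf\subseteq\Omp$ (if $J=\bigcup_iJ_i$ with $J_i\in m_f$ then $|Y_J-E_J|\le\sum_i|Y_{J_i}-E_{J_i}|\le\varepsilon\sum_iE_{J_i}=\varepsilon E_J$). It then suffices to establish, for $|x|\le\varepsilon$,
\[
x\log(1+x)\le C(\varepsilon)\,x^2+\frac{1}{1+\varepsilon}\bigl(x-\log(1+x)\bigr),
\]
with $C(\varepsilon)=\tfrac12\tfrac{1+\varepsilon}{1-\varepsilon}$ in the Poisson case (and the matching inequality with $C(\varepsilon)=\tfrac{1+\varepsilon}{4}$ in the negative binomial case). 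Setting $\phi_0(x)=x-\log(1+x)\ge0$, this rewrites as $(1-C(\varepsilon))x^2\le\bigl(x+\tfrac1{1+\varepsilon}\bigr)\phi_0(x)$; I would prove it by elementary calculus, splitting the sign of $x$ and using $\phi_0(x)\ge x^2/\bigl(2(1+x)\bigr)$ for $x\ge0$ and $\phi_0(x)\ge x^2/2$ for $x\le0$, the value of $C(\varepsilon)$ being precisely what makes the resulting polynomial inequalities in $\varepsilon$ hold.

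For (iii): multiplying the scalar inequality by $|J|\bar\lambda_J>0$ and summing over $J\in m'$ gives, on $\Omf$,
\[
\bigl(\barga(\barsmp)-\barga(\hatsmp)\bigr)\mathbf{1}_{\Omf}\le C(\varepsilon)\,\chi^2_{m'}\mathbf{1}_{\Omf}+\frac{1}{1+\varepsilon}K(\barsmp,\hatsmp),
\]
and on $\OO1$ the first right-hand term is at most $C(\varepsilon)\bigl(|m'|+8(1+\varepsilon)\sqrt{(L_{m'}|m'|+\xi)|m'|}+4(1+\varepsilon)(L_{m'}|m'|+\xi)\bigr)$ by the very definition of $\OO1$, which is the claimed bound; the negative binomial case is identical once its scalar inequality is available. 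The only genuine work is step (ii), done twice: recognising that the fluctuation term, the chi-square increment and the Kullback--Leibler increment are all segment-wise sums of three fixed functions of $x_J$, and verifying — with the sharp constant — that on $[-\varepsilon,\varepsilon]$ the first is dominated by a combination of the other two. The negative binomial instance is the more delicate, the log-likelihood increment there not collapsing to anything as clean as $x\log(1+x)$.
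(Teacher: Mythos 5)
Your Poisson half is correct, and in substance it is the paper's own argument repackaged segment by segment: the paper writes the same sum $\sum_{J\in m'}|J|(\bar Y_J-\bar\lambda_J)\log\frac{\bar Y_J}{\bar\lambda_J}$, bounds it by $\sqrt{\chi^2_{m'}}\sqrt{V^2_{m'}}$ via Cauchy--Schwarz, uses the comparison $\frac{1-\varepsilon}{2}V^2_{m'}\le K(\bar s_{m'},\hat s_{m'})$ on $\Omega_{m_f}(\varepsilon)$ (equation (\ref{lienKV2})), applies $2ab\le\kappa a^2+\kappa^{-1}b^2$ with $\kappa=\frac{1+\varepsilon}{1-\varepsilon}=2C(\varepsilon)$, and only then invokes the definition of $\Omega_1(\xi)$. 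Your scalar inequality $x\log(1+x)\le C(\varepsilon)x^2+\frac{1}{1+\varepsilon}\bigl(x-\log(1+x)\bigr)$ for $1+x\ge 1-\varepsilon$ is exactly the per-segment bundling of these two facts (it follows from $|x\log(1+x)|\le\frac{\kappa}{2}x^2+\frac{1}{2\kappa}\log^2(1+x)$ together with $\log^2(1+x)\le\frac{2}{1-\varepsilon}\bigl(x-\log(1+x)\bigr)$), and your remark that $\Omega_{m_f}(\varepsilon)$ forces $|\bar Y_J/\bar E_J-1|\le\varepsilon$ for every $J\in m'$ makes explicit a point the paper leaves implicit. (A small caveat: your proposed proof via the rewriting $(1-C(\varepsilon))x^2\le\bigl(x+\frac{1}{1+\varepsilon}\bigr)\bigl(x-\log(1+x)\bigr)$ needs an extra word when $x<-\frac{1}{1+\varepsilon}$, which can occur once $\varepsilon>(\sqrt5-1)/2$; the AM--GM route above avoids this.)

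The genuine gap is the negative binomial half, which you assert rather than prove. There the per-segment quantities are not functions of $x_J$ alone: the fluctuation summand is $|J|(\bar Y_J-\bar E_J)\log\dfrac{\bar Y_J/(\phi+\bar Y_J)}{1-p_J}$ and the Kullback--Leibler increment depends jointly on $x_J$, $p_J$ and $\phi$, so "the matching scalar inequality with $C(\varepsilon)=\frac{1+\varepsilon}{4}$" is a two-parameter statement whose validity is precisely what has to be established --- and it is the delicate part you defer. The missing ingredient, which the paper supplies, is the comparison $K(\bar s_{m'},\hat s_{m'})\ge V^2_{m'}$ with $V^2_{m'}$ defined in (\ref{defV2BN}) (equation (\ref{KV2BN})), obtained from $h_a(x)\ge\frac{1-x}{1-a}\log^2\frac{1-x}{1-a}$ together with $\bar Y_J/(\phi+\bar Y_J)\le1$; once this is in hand, the same Cauchy--Schwarz step (the log-ratio has the sign of $\bar Y_J-\bar E_J$) and $2ab\le\kappa a^2+\kappa^{-1}b^2$ with $\kappa=\frac{1+\varepsilon}{2}=2C(\varepsilon)$ give the stated bound, and --- unlike the Poisson case --- no $\varepsilon$-dependent lower bound on the squared log term, hence no use of $\Omega_{m_f}(\varepsilon)$, is needed for that comparison. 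Without an argument equivalent to (\ref{KV2BN}), half of the proposition remains unproved.
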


    \vspace{1cm}
  \item The control of the term $\barga(\barsm)-\barga(s)$, or more precisely its expectation, is given by the following proposition:
    \begin{prpstn} \label{t2}
      \begin{eqnarray} \label{terme2}
        |\E[(\barga(\barsm)-\barga(s))\mathbf{1}_{\Omf}  ]| &\leq& \frac{C(\phi,\Gamma,\rho_{min},\rho_{max},\varepsilon,a)}{n^{(a-1)/2}}.
      \end{eqnarray}
    \end{prpstn}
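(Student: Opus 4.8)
The plan is to exploit that $\barga(\barsm)-\barga(s)$ is, after trivial cancellations, a \emph{centered} linear combination of the observations, and then to replace the event $\Omf$ by its complement and apply the Cauchy--Schwarz inequality. First I would make the difference explicit. Set $W:=\barga(\barsm)-\barga(s)$ and recall that $\barga(u)=\gamma(u)-\E[\gamma(u)]$ with $\barsm$ deterministic. In $\gamma(\barsm)-\gamma(s)$, all the summands of the contrast that do not involve the candidate distribution (the $\log(Y_t!)$ term in model $(\mathcal{P})$, the $\Gamma$-function term in model $(\NB)$, and the purely deterministic summands) cancel, either directly or after centering; using the explicit expressions (\ref{sbar}) for $\barsm$ one is left with
\begin{eqnarray*}
W=-\sum_{J\in m}\sum_{t\in J}(Y_t-E_t)\,c_t,
\end{eqnarray*}
where $E_t=\lambda_t$ and $c_t=\log(\bar{\lambda}_J/\lambda_t)$ in the Poisson case, and $E_t=\phi(1-p_t)/p_t$ and $c_t=\log\frac{1-p_J}{1-p_t}$ in the negative binomial case. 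In particular $\E[W]=0$.

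Using $\E[W]=0$, I would then write $\E[W\mathbf{1}_{\Omf}]=-\E[W\mathbf{1}_{\Omf^{C}}]$, so that by the Cauchy--Schwarz inequality
\begin{eqnarray*}
\big|\E[(\barga(\barsm)-\barga(s))\mathbf{1}_{\Omf}]\big|=\big|\E[W\mathbf{1}_{\Omf^{C}}]\big|\leq\sqrt{\E[W^{2}]}\,\sqrt{\P(\Omf^{C})}.
\end{eqnarray*}
The second factor is already under control: applying (\ref{ControlOmega}) with $m=m_f$ gives $\P(\Omf^{C})\leq C(\phi,\Gamma,\rho_{min},\varepsilon,a)\,n^{-a}$. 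For the first factor, independence of the $Y_t$ together with $\E[Y_t]=E_t$ yields $\E[W^{2}]=\sum_{t=1}^{n}c_t^{2}\,Var(Y_t)$. The hypothesis $\rho_{min}\leq\theta_t\leq\rho_{max}$ does the remaining work: it forces the same bounds on the segmentwise averages $\bar{\lambda}_J$ and $p_J$, so that $|c_t|$ is bounded by a constant depending only on $\rho_{min},\rho_{max}$, while $Var(Y_t)$ equals $\lambda_t\leq\rho_{max}$ in the Poisson case and $\phi(1-p_t)/p_t^{2}\leq\phi/\rho_{min}^{2}$ in the negative binomial case. Hence $\E[W^{2}]\leq C(\phi,\rho_{min},\rho_{max})\,n$.

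Putting the two estimates together gives
\begin{eqnarray*}
\big|\E[(\barga(\barsm)-\barga(s))\mathbf{1}_{\Omf}]\big|\leq\sqrt{C(\phi,\rho_{min},\rho_{max})\,n}\,\sqrt{\frac{C(\phi,\Gamma,\rho_{min},\varepsilon,a)}{n^{a}}}=\frac{C(\phi,\Gamma,\rho_{min},\rho_{max},\varepsilon,a)}{n^{(a-1)/2}},
\end{eqnarray*}
which is precisely (\ref{terme2}); the $\sqrt n$ coming from the second moment of $W$ is absorbed by the $n^{-a/2}$ tail probability of $\Omf^{C}$. There is no genuine obstacle here: the only point deserving care is the very first step, namely verifying that the model-independent parts of $\gamma$ really do cancel so that $W$ is \emph{linear} in the $Y_t$ (which is what makes both $\E[W]=0$ and the clean variance identity available) and that the boundedness of the $\theta_t$ transfers to $\bar{\lambda}_J$ and $p_J$. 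The two models are treated in parallel, the sole difference being the explicit constants above.
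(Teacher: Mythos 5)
Your argument is correct and matches the paper's proof in all essentials: you use that $\barga(\barsm)-\barga(s)$ is a centered linear combination of the $Y_t$ to pass to the complement $\Omf^{C}$, then Cauchy--Schwarz together with the tail bound (\ref{ControlOmega}) and a variance of order $n$ (with the log-ratio coefficients bounded via $\rho_{min},\rho_{max}$). The only (cosmetic) difference is that you apply Cauchy--Schwarz directly to the weighted sum, whereas the paper first extracts the bound $\log(\rho_{max}/\rho_{min})$ (resp. $\log(1/(1-\rho_{min}))$) on the coefficients and then applies it to $\sum_t(Y_t-E_t)$.
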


    \vspace{1cm}
  \item To control $\barga(s) - \barga(\bar{s}_{m'})$, we use the following proposition which gives an exponential bound for $\barga(s)-\barga(u)$.

    \begin{prpstn}    \label{s-u}
    Let $s$ and $u$ be two distributions of a sequence $Y$. Let $\gamma$ be the log-likelihood contrast, $\barga(u)=\gamma (u)-\E[\gamma (u)]$, and $K(s,u) $ and $h^2(s,u)$ be respectively the Kullback-Leibler and the squared Hellinger distances between distributions $s$ and $u$. Then $\forall x>0$,
      \begin{eqnarray*}
        \P\left[\bar{\gamma}(s)-\bar{\gamma}(u) \geq K(s,u) - 2h^2(s,u)+2x \right] &\leq &  e^{-x}.
      \end{eqnarray*}
    \end{prpstn}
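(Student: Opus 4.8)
The plan is to write $\bar\gamma(s) - \bar\gamma(u)$ as a centered sum of independent random variables and apply a one-sided Bernstein-type deviation inequality in which the relevant variance proxy is controlled by the Hellinger distance. Recall that $\gamma(u) = \sum_{t=1}^n -\log \P_u(Y_t)$, so that
\begin{eqnarray*}
\bar\gamma(s) - \bar\gamma(u) &=& \sum_{t=1}^n \left( \log \frac{\P_u(Y_t)}{\P_s(Y_t)} - \E\left[\log \frac{\P_u(Y_t)}{\P_s(Y_t)}\right]\right) = \sum_{t=1}^n (X_t - \E[X_t]),
\end{eqnarray*}
where $X_t = \log\big(\P_u(Y_t)/\P_s(Y_t)\big)$ and the $X_t$ are independent under $s$. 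Note $\sum_t \E[X_t] = -K(s,u)$, so that $\bar\gamma(s)-\bar\gamma(u) \geq K(s,u) - 2h^2(s,u) + 2x$ is the same as $\sum_t(X_t - \E[X_t]) \geq K(s,u) - 2h^2(s,u) + 2x$.

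Next I would use the standard exponential-moment trick underlying this kind of bound (see \cite{Massart}, the proof of the analogous statement in density estimation): for the choice of Cramér parameter $z = 1/2$, one has for each $t$
\begin{eqnarray*}
\log \E\left[ e^{\frac{1}{2} X_t} \right] = \log \E_s\left[\sqrt{\frac{\P_u(Y_t)}{\P_s(Y_t)}}\right] = \log\left(1 - h_t^2\right) \leq -h_t^2,
\end{eqnarray*}
where $h_t^2 = \frac12 \sum_y \big(\sqrt{\P_s(y)} - \sqrt{\P_u(y)}\big)^2$ is the per-coordinate squared Hellinger distance, and $\sum_t h_t^2 = h^2(s,u)$ (Hellinger distance tensorizes). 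Therefore, using independence and Markov's inequality applied to $\exp\big(\frac12 \sum_t X_t\big)$,
\begin{eqnarray*}
\P\left[\sum_{t=1}^n X_t \geq -2\sum_{t=1}^n h_t^2 + 2x\right] &\leq& e^{-x - \sum_t h_t^2}\, \prod_{t=1}^n \E\left[e^{\frac12 X_t}\right] \leq e^{-x - \sum_t h_t^2 + \sum_t h_t^2}\ \hmm
\end{eqnarray*}
wait — more carefully: $\P[\sum_t X_t \geq r] \leq e^{-r/2}\prod_t \E[e^{X_t/2}] \leq e^{-r/2}\, e^{-\sum_t h_t^2} = e^{-r/2 - h^2(s,u)}$. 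Taking $r = -2h^2(s,u) + 2x$ gives $\P[\sum_t X_t \geq -2h^2(s,u)+2x] \leq e^{-x}$. Finally, since $\sum_t X_t = \bar\gamma(s)-\bar\gamma(u) - K(s,u)$, this is exactly $\P[\bar\gamma(s)-\bar\gamma(u) \geq K(s,u) - 2h^2(s,u) + 2x] \leq e^{-x}$, as claimed.

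I expect the only genuinely delicate point to be the clean identification of the exponential moment: one must check that $\E_s[(\P_u(Y_t)/\P_s(Y_t))^{1/2}] = 1 - h_t^2$ holds even when $\P_s$ has zeros or the likelihood ratio is unbounded (here the counting measures on $\N$ are $\sigma$-finite and the families are mutually absolutely continuous on the support, so the identity and the factorization $\sum_t h_t^2 = h^2(s,u)$ go through), and that no integrability obstruction arises — in contrast to the Kullback--Leibler quantity, the half-power moment is automatically finite and bounded by $1$, which is precisely why the Hellinger distance, and not $K$, appears on the right-hand side. Everything else is the routine Chernoff/Markov argument with the specific value $z=1/2$ chosen so that the Laplace transform collapses to a Hellinger affinity.
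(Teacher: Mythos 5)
Your argument is correct and is essentially the paper's own proof: both apply Markov's inequality to the exponential moment at parameter $1/2$, identify $\E_s\bigl[\exp\bigl(\tfrac12\log(\P_u(Y_t)/\P_s(Y_t))\bigr)\bigr]$ with the Hellinger affinity, and use $\log \rho \le \rho-1$ (your $\log(1-h_t^2)\le -h_t^2$) before choosing $b=K(s,u)-2h^2(s,u)+2x$. The only cosmetic difference is that you center the log-likelihood ratios first and invoke coordinatewise additivity of $h^2$ (which, even under the product-measure convention where $h^2\le\sum_t h_t^2$, only makes the bound easier), whereas the paper manipulates $\bar\gamma(s)-\bar\gamma(u)$ directly; the substance is identical.
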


 Applying it to $u=\bar{s}_{m'}$ yields:
    \begin{eqnarray} \label{terme1}
      \P\left[\bar{\gamma}(s)-\bar{\gamma}(\bar{s}_{m'}) \geq K(s,\bar{s}_{m'}) - 2h^2(s,\bar{s}_{m'})+2x \right] &\leq &  e^{-x}.
    \end{eqnarray}
    We then define $\mO2=  \bigcap_{m' \in \M_n}\left\{ \bar{\gamma}(s)-\bar{\gamma}(\bar{s}_{m'}) \leq K(s,\bar{s}_{m'}) - 2h^2(s,\bar{s}_{m'})+2(L_{m'}|m'|+\xi) \right\}$.
    \end{itemize}

    \vspace{0.5cm}
Let $\Omega(\varepsilon,\xi) = \Omf\cap\OO1 \cap \mO2$. Then,
combining equation (\ref{terme1}) and proposition \ref{t3},  we get
for $m' = \hat{m}$,

    \begin{eqnarray*}
      (\barga(\barsm) -\barga(\hatsmh))\O & = & (\barga(s)-\barga(\barsmh))\O +(\barga(\barsm) -\barga(s))\O +(\barga(\barsmh)-\barga(\hatsmh))\O\\
&\leq & \left[K(s,\barsmh)-2h^2(s,\barsmh)\right]\O + R\O + \dfrac{1}{1+\varepsilon}K(\barsmh,\hatsmh)\O \\
      &  & +C(\varepsilon) \left[|\mh| +8(1+\varepsilon) \sqrt{(L_{\mh} |\mh|+\xi)|\mh|}+ 4(1+\varepsilon)(L_{\mh}|\mh|+\xi) \right] \\
      &&+ 2 L_{\mh} |\mh| +2\xi,
    \end{eqnarray*}
     with $R= \bar{\gamma}(\bar{s}_m)-\bar{\gamma}(s)$. So that
    \begin{eqnarray*}
      K(s,\hat{s}_{\hat{m}})\O &\leq & \left[K(s,\barsmh)-2h^2(s,\barsmh)\right]\O + \dfrac{1}{1+\varepsilon}K(\barsmh,\hatsmh)\O \\
      & & + C(\varepsilon)\left[|\mh| +8(1+\varepsilon) \sqrt{(L_{\mh} |\mh|+\xi)|\mh|}+ 4(1+\varepsilon)(L_{\mh}|\mh|+\xi) \right]\\
      & & + K(s,\barsm)\O + 2L_{\mh} |\mh|+2\xi +R\O -pen(\hat{m})+pen(m).
    \end{eqnarray*}

    And since \begin{itemize}
      \item  $K(s,\hat{s}_{\hat{m}}) =  K(s,\bar{s}_{\hat{m}}) +
      K(\barsmh,\hatsmh)$ (see equation (\ref{Pythagore})),
      \item $K(s,u)\geq 2h^2(s,u)$ (see lemma 7.23 in \cite{Massart}),
      \item $h^2(s,\hatsmh)\leq 2\left(h^2(s,\barsmh)+h^2(\barsmh , \hatsmh)
      \right)$ (using inequality $2 a b \leq \kappa a^2+\kappa^{-1} b^2$ with
      $\kappa=1$),
    \end{itemize}

    \begin{eqnarray*}
      \dfrac{\varepsilon}{1+\varepsilon} h^2(s,\hatsmh)\O &\leq& K(s,\barsm)\O + R\O -pen(\hat{m})+pen(m)\\
      & & + |\mh| C(\varepsilon)\left[1+ (1+\varepsilon)\left(8\sqrt{L_{\mh}}+ \varepsilon +4 L_{\mh}\right) \right] + 2 L_{\mh}|\mh|\\
      & & + 2\xi\left[1 + C(\varepsilon)\left(8(1+\varepsilon)\dfrac{2}{\varepsilon} +4(1+\varepsilon) \right)\right].
    \end{eqnarray*}

    But
    \begin{eqnarray*}
 C(\varepsilon)\left[1+ (1+\varepsilon)\left(8\sqrt{L_{\mh}}+ \varepsilon +4 L_{\mh}\right) \right] + 2 L_{\mh}& \leq &C(\varepsilon)\left[1+ (1+\varepsilon)\left(\varepsilon + 8\sqrt{L_{\mh}}+ 8 L_{\mh}\right) \right]\\
&  \leq & C_2(\varepsilon)\left[1 +8\sqrt{L_{\mh}}+ 8 L_{\mh}
\right].
    \end{eqnarray*}
with
$C_2(\varepsilon)=\dfrac{1}{2}\left(\dfrac{1+\varepsilon}{1-\varepsilon}\right)^3$
for $(\mathcal{P})$ and
$C_2(\varepsilon)=\dfrac{1}{4}\left(1+\varepsilon \right)^3$ for
$(\NB)$. So we have
    \begin{eqnarray*}
      \dfrac{\varepsilon}{1+\varepsilon} h^2(s,\hatsmh)\O &\leq& K(s,\barsm)\O + R\O -pen(\hat{m})+pen(m)\\
      & & + |\mh| C_2(\varepsilon) \left(1 + 4\sqrt{L_{\mh}} \right)^2 + 2\xi\left[1 +(1+\varepsilon)C(\varepsilon)\left(\dfrac{8}{\varepsilon}+2
\right)\right].
    \end{eqnarray*}

    By assumption, $pen(\mh)\geq \beta |\mh| \left(1 + 4\sqrt{L_{\mh}} \right)^2$.
    Choosing  $\beta = C_2(\varepsilon)$ yields

    \begin{eqnarray*}
      h^2(s,\hat{s}_{\hat{m}})\O &\leq& C_{\beta} \left[K(s,\barsm)\O + R\O + pen(m)\right]+\xi C(\beta).
    \end{eqnarray*}

Then, using propositions \ref{t2} and \ref{t3}, we have
$\P\left(\OO1^C \right) \leq \sum_{m' \in \M_n} e^{-L_{m'}|m'| +
\xi}$
    and $\P\left(\mO2^C \right) \leq \sum_{m' \in \M_n} e^{-L_{m'}|m'| +
    \xi}$. So that using hypothesis (\ref{weights}),
    \begin{eqnarray*}
      \P\left(\OO1^C \cup \mO2^C \right) &\leq& 2 \sum_{m' \in \M_n} e^{-L_{m'}|m'| + \xi} \leq 2\Sigma e^{-\xi},
    \end{eqnarray*}
    and thus $ \P\left(\OO1 \cap \mO2 \right) \geq 1- 2\Sigma e^{-\xi}$. We now integrate over $\xi$, and using equation (\ref{terme2}), we
    get with a probability larger than $1- 2\Sigma e^{-\xi}$

    \begin{eqnarray*}
      \E\left[h^2(s,\hatsmh)\mathbf{1}_{\Omf} \right] &\leq & C_{\beta}  \left[K(s,\barsm)+ \frac{C(\phi,\Gamma,\rho_{min},\rho_{max},\beta,a)}{n^{(a-1)/2}}+pen(m)\right] + \Sigma C(\beta).
    \end{eqnarray*}

    And since  $ \E\left[h^2(s,\hatsmh)\mathbf{1}_{\Omf^C} \right] \leq \dfrac{C(\phi,\Gamma,\rho_{min},\rho_{max},\beta,a)}{n^{a-1}}$, we have
    \begin{eqnarray*}
      \E\left[h^2(s,\hatsmh) \right] &\leq & C_{\beta} \left[K(s,\barsm) +pen(m)\right] + C'(\phi,\Gamma,\rho_{min},\rho_{max},\beta,\Sigma).
    \end{eqnarray*}

    Finally, by minimizing over $m\in\mathcal{M}_n$, we get
    \begin{eqnarray*}
      \E\left[h^2(s,\hatsmh) \right] &\leq & C_{\beta} \inf_{m\in\mathcal{M}_n} \left\{K(s,\barsm) +pen(m)\right\} + C'(\phi,\Gamma,\rho_{min},\rho_{max},\beta,\Sigma).\\
    \end{eqnarray*}

\section{Appendices} \label{Appendices}

\subsection{Proof of proposition \ref{Risk-control}} \label{risk}
Using Pythagore-type identity,  we obtain the following
decomposition (see for example \cite{Cast00}):
\begin{eqnarray} \label{Pythagore}
  K(s,\hatsm) = K(s,\barsm) + K(\barsm,\hatsm).
\end{eqnarray}
The objective is then to obtain a lower bound of
$\E[K(\barsm,\hatsm)]$ in the two considered distribution cases.

\subsubsection*{Poisson case}
We have
    \begin{eqnarray*}
      K(\bar{s}_m,\hat{s}_m)=\sum_{J \in m}|J|\left(\bar{Y}_J-\bar{\lambda}_J-\bar{\lambda}_J\log\dfrac{\bar{Y}_J}{\bar{\lambda}_J}\right)= \sum_{J\in m}|J| \bar{\lambda}_J \Phi \left( \log{\dfrac{\bar{Y}_J}{\bar{\lambda}_J}} \right).
    \end{eqnarray*}
    where $\Phi(x)=e^x-1-x$. Since $\dfrac{1}{2} x^2 (1 \wedge e^x) \leq \Phi(x) \leq \dfrac{1}{2} x^2
(1 \vee e^x)$, then  on  $\Omf$, we have

    \begin{eqnarray*}
      \dfrac{1}{2} \log^2{\dfrac{\bar{Y}_J}{\bar{\lambda}_J}} \left(1 \wedge \dfrac{\bar{Y}_J}{\bar{\lambda}_J}\right) &\leq& \Phi\left(\log{\frac{\bar{Y}_J}{\bar{\lambda}_J}}\right)  \leq  \dfrac{1}{2} \log^2{\dfrac{\bar{Y}_J}{\bar{\lambda}_J}} \left(1 \vee \dfrac{\bar{Y}_J}{\bar{\lambda}_J}\right), \\
\frac{1-\varepsilon}{2} \log^2{\frac{\bar{Y}_J}{\bar{\lambda}_J}} &
\leq &  \Phi\left(\log{\frac{\bar{Y}_J}{\bar{\lambda}_J}}\right)
\leq  \frac{1+\varepsilon}{2}
\log^2{\frac{\bar{Y}_J}{\bar{\lambda}_J}}.
    \end{eqnarray*}

    So
    \begin{eqnarray}    \label{lienKV2}
      \frac{1-\varepsilon}{2} V^2_m &\leq &K(\bar{s}_m,\hat{s}_m)\leq \frac{1+\varepsilon}{2}
      V^2_m,
    \end{eqnarray}

    where
    \begin{eqnarray}    \label{def-V2}
      V^2_m=V^2(\bar{s}_m,\hat{s}_m)&=&\sum_{J\in m} |J| \bar{\lambda}_J \log^2{\frac{\bar{Y}_J}{\bar{\lambda}_J}}=\sum_{J\in m} |J| \dfrac{(\bar{Y}_J-\bar{\lambda}_J)^2}{\bar{\lambda}_J} \left( \dfrac{\log\frac{\bar{Y}_J}{\bar{\lambda}_J}}{\frac{\bar{Y}_J}{\bar{\lambda}_J}-1}\right)^2.
    \end{eqnarray}

    And using, for $x>0$, $ \dfrac{1}{1\vee x} \leq \dfrac{\log x}{x-1}\leq \dfrac{1}{1 \wedge x} $, we get, on $\Omf$
    \begin{eqnarray} \label{link:V2Chi2}
      \dfrac{1}{(1+\varepsilon)^2}\ \chi^2_m \leq V^2_m \leq \dfrac{1}{(1-\varepsilon)^2}\ \chi^2_m.
    \end{eqnarray}
    So
    \begin{eqnarray*}
      \dfrac{1-\varepsilon}{2(1+\varepsilon)^2}\ \chi^2_m \mathbf{1}_{\Omf}\leq K(\bar{s}_m,\hat{s}_m)\mathbf{1}_{\Omf} \leq \dfrac{1+\varepsilon}{2(1-\varepsilon)^2}\ \chi^2_m\mathbf{1}_{\Omf}.
    \end{eqnarray*}
    On one hand,  $\E\left[\chi^2_m\right] = |m|$, and
    \begin{eqnarray*}
      \dfrac{1-\varepsilon}{2(1+\varepsilon)^2} |m| - \E\left[\chi^2_m\mathbf{1}_{\Omf^C} \right]  \leq \E\left[K(\bar{s}_m,\hat{s}_m)\mathbf{1}_{\Omf}\right] \leq \dfrac{1+\varepsilon}{2(1-\varepsilon)^2} |m|.
    \end{eqnarray*}

Since $\chi^2_m \leq \frac{1}{ \Gamma (\log{(n)})^2
\rho_{min}}\sum_{J\in m}(Y_J-\lambda_J)^2 \leq \frac{1}{\Gamma
(\log{(n)})^2
\rho_{min}}\left(\sum_{t}Y_t-\sum_t\lambda_t\right)^2$, using
Cauchy-Schwarz Inequality, we get

\begin{eqnarray*}
\E\left[\chi^2_m\mathbf{1}_{\Omf^C} \right]
&\leq&\frac{1}{\Gamma (\log{(n)})^2 \rho_{min}}\left[3\left(\sum_t \lambda_t\right)^2+\sum_t \lambda_t \right]^{1/2} P(\Omf^C)^{1/2} \\
&\leq& C(\Gamma, \rho_{min},\rho_{max}) \frac{n}{(\log{(n)})^2}P(\Omf^C)^{1/2} \\
&\leq& C(\Gamma, \rho_{min},\rho_{max}) n^{\alpha} P(\Omf^C)^{1/2} \\
&\leq&\frac{C(\phi,\Gamma,\rho_{min},\rho_{max},\varepsilon,a)}{n^{a/2-\alpha}},\\
\end{eqnarray*}
where $\alpha = 1-2 \frac{\log{(\log{(n)})}}{\log{(n)}}$, $n \geq
2$. For example, $\alpha=0.62$ for $n=10^6$.

    \vspace{0.5cm}

On the other hand, using $\log{1/x} \geq 1-x$ for all $x>0$,
$\E\left[K(\bar{s}_m,\hat{s}_m)\mathbf{1}_{\Omf^C}\right] \geq 0$.
Finally, we have
    \begin{eqnarray*}
      K(s,\bar{s}_m)+\dfrac{1-\varepsilon}{2(1+\varepsilon)^2}|m|- \dfrac{C_1(\Gamma,\rho_{min},\rho_{max},\varepsilon,a)}{n^{a/2-\alpha}}\leq \E[K(s,\hat{s}_m)],
    \end{eqnarray*}

\subsubsection*{Negative binomial case}

We have $K(\barsm, \hatsm) = \phi \sum_{J \in m}\dfrac{|J|}{p_J}
h_{\frac{\phi}{\phi+\bar{Y}_J}}\left(p_J\right)$ and $\forall
0<a<1,\, h_a(x)\geq \dfrac{1-x}{1-a}\log^2\left(\dfrac{1-x}{1-a}
\right)$. \\
Then on $\Omf$
\begin{eqnarray*}
K(\barsm, \hatsm) &\geq& \phi \sum_{J \in m}\dfrac{|J|}{p_J}
\dfrac{1-p_J}{\frac{\bar{Y}_J}{\phi+\bar{Y}_J}} \log^2
\left(\dfrac{\frac{\bar{Y}_J}{\phi+\bar{Y}_J}}{1-p_J} \right).
\end{eqnarray*}
Introducing
\begin{eqnarray} \label{defV2BN}
 V^2_m = \sum_{J \in m}\phi |J| \dfrac{1-p_J}{p_J} \log^2 \left(\dfrac{\frac{\bar{Y}_J}{\phi+\bar{Y}_J}}{1-p_J} \right),
\end{eqnarray}
we get
\begin{eqnarray} \label{KV2BN}
K(\barsm, \hatsm) \geq V^2_m,
\end{eqnarray}
and since
$\bar{Y}_J-\phi\frac{1-p_J}{p_J}=\frac{\phi+\bar{Y}_J}{p_J}\left(\frac{\bar{Y}_J}{\phi+\bar{Y}_J}-(1-p_J)
\right)$, we have
\begin{eqnarray*}
 V^2_m &=&\sum_{J \in m}  |J| \left(\frac{\phi}{\phi+\bar{Y}_J}\right)^2 \dfrac{\left(\bar{Y}_J-\phi\frac{1-p_J}{p_J}\right)^2 }{\phi\frac{1-p_J}{p_J}}  \left[\dfrac{\log \left(\dfrac{\frac{\bar{Y}_J}{\phi+\bar{Y}_J}}{1-p_J} \right)}{\dfrac{\frac{\bar{Y}_J}{\phi+\bar{Y}_J}}{1-p_J} -1}
 \right]^2.
 \end{eqnarray*}
 And finally,
 \begin{eqnarray*}
 K(\barsm, \hatsm) \mathbf{1}_{\Omf} &\geq& \rho_{min}^2 \dfrac{(1-\varepsilon)^2}{(1+\varepsilon)^4}\ \chi^2_m
 \mathbf{1}_{\Omf}.
\end{eqnarray*}
Moreover, on one hand we have $|m|\leq \E\left[\chi^2_m\right] \leq
\frac{1}{\rho_{min}}|m|$. On the other hand, since $\chi^2_m  \leq
\frac{1}{\Gamma (\log{(n)})^2 \phi
(1-\rho_{max})}\left(\sum_{t}Y_t-\sum_t E_t\right)^2$, using
Cauchy-Schwarz Inequality, we get

\begin{eqnarray*}
\E\left[\chi^2_m\mathbf{1}_{\Omf^C} \right] &\leq&\frac{\left[\E \left(Y_t-E_t \right)^4 +6\phi^2\sum_{(t,l),l\neq t}\frac{1-p_t}{p_t^2} \frac{1-p_l}{p_l^2}\right]^{1/2}}{\Gamma (\log{(n)})^2 \phi(1-\rho_{max})} P(\Omf^C)^{1/2}, \\
&\leq& C(\Gamma, \rho_{min},\rho_{max}) n^{\alpha} P(\Omf^C)^{1/2}, \\
&\leq&\frac{C(\phi,\Gamma,\rho_{min},\rho_{max},\varepsilon,a)}{n^{a/2-\alpha}},\\
\end{eqnarray*}
where $\alpha = 1-2 \frac{\log{(\log{(n)})}}{\log{(n)}}$, $n \geq
2$. Finally, we have

    \begin{eqnarray*}
      K(s,\bar{s}_m)+\rho_{min}^2\dfrac{(1-\varepsilon)^2}{(1+\varepsilon)^4}|m|- \dfrac{C(\phi,\Gamma,\rho_{min},\rho_{max},\varepsilon,a)}{n^{a/2-\alpha}}\leq
      \E[K(s,\hat{s}_m)].
    \end{eqnarray*}

\subsection{Proof of proposition \ref{t3}} \label{a-t3}

\subsubsection*{Poisson case}

The term to be controlled is
$\bar{\gamma}(\hat{s}_{m'})-\bar{\gamma}(\bar{s}_{m'})=\sum_{J \in
m'}|J|\left(\bar{Y}_J-\bar{\lambda}_J \right) \log
\dfrac{\bar{Y}_J}{\bar{\lambda}_J}$. Using Cauchy-Schwarz
inequality, we have

    \begin{eqnarray*}
      \bar{\gamma}(\bar{s}_{m'})-\bar{\gamma}(\hat{s}_{m'}) &\leq & \sqrt{\chi^2_{m'}}\ \  \sqrt{V^2_{m'}},
    \end{eqnarray*}
    with $\chi_{m'}^2$ and $V^2_{m'}$ defined as in equations (\ref{def-chi}) and (\ref{def-V2}). Then, using equation (\ref{lienKV2})

    \begin{eqnarray*}
      \left(\bar{\gamma}(\bar{s}_{m'})-\bar{\gamma}(\hat{s}_{m'})\right)\mathbf{1}_{\Omf} &\leq & \sqrt{\chi^2_{m'}}\ \  \sqrt{\frac{2}{1-\varepsilon}K(\bar{s}_{m'},\hat{s}_{m'})},
    \end{eqnarray*}
    and using $2 a b \leq \kappa a^2+\kappa^{-1} b^2$ for all $\kappa>0$, we get

    \begin{eqnarray} \label{cont-Emilie}
      \left(\bar{\gamma}(\bar{s}_{m'})-\bar{\gamma}(\hat{s}_{m'})\right)\mathbf{1}_{\Omf} &\leq & \dfrac{\kappa}{2} \chi^2_{m'}+ \frac{ \kappa^{-1}}{1-\varepsilon} K(\bar{s}_{m'},\hat{s}_{m'}).
    \end{eqnarray}
And with proposition \ref{cont-chi2}, we get, for $\kappa =
\dfrac{1+\varepsilon}{1-\varepsilon}= 2C(\varepsilon)$,
    \begin{eqnarray*}
  & &    \left(\bar{\gamma}(\hat{s}_{m'})-\bar{\gamma}(\bar{s}_{m'})\right) \mathbf{1}_{\Omf\cap\OO1} \\
  & & \leq \dfrac{1+\varepsilon}{2(1-\varepsilon)} \left[|m'|+8(1+\varepsilon) \sqrt{(L_{m'} |m'|+\xi)|m'|}+ 4(1+\varepsilon)(L_{m'}|m'|+\xi)  \right] +\dfrac{1}{1+\varepsilon}K(\bar{s}_{m'},\hat{s}_{m'}).
    \end{eqnarray*}

\subsubsection*{Negative binomial case}

In this case we can write  $\barga(\hatsmp)-\barga(\barsmp)= \sum_{J
\in m'} |J| \left(\bar{Y}_J -\bar{E}_J \right)
\log\dfrac{\frac{\bar{Y}_J }{\phi+\bar{Y}_J }}{1-p_J}$. Again, using
Cauchy-Schwarz inequality, and with $\chi^2_m$ and $V^2_m$ defined
by equations (\ref{def-chi}) and (\ref{defV2BN}), we get
    \begin{eqnarray*}
      \bar{\gamma}(\bar{s}_{m'})-\bar{\gamma}(\hat{s}_{m'}) &\leq & \sqrt{\chi^2_{m'}}\ \  \sqrt{V^2_{m'}},
    \end{eqnarray*}
    so that with equation (\ref{KV2BN}) and  $2 a b \leq \kappa a^2+\kappa^{-1} b^2$ for all $\kappa>0$
    \begin{eqnarray}
      \left(\bar{\gamma}(\bar{s}_{m'})-\bar{\gamma}(\hat{s}_{m'})\right)\mathbf{1}_{\Omf} &\leq & \dfrac{\kappa}{2} \chi^2_{m'}+ \frac{\kappa^{-1}}{2} K(\bar{s}_{m'},\hat{s}_{m'}).
    \end{eqnarray}

Finally, with proposition \ref{cont-chi2} and  $\kappa =
\dfrac{1+\varepsilon}{2} = 2C(\varepsilon)   $,
    \begin{eqnarray*}
  & &    \left(\bar{\gamma}(\hat{s}_{m'})-\bar{\gamma}(\bar{s}_{m'})\right) \mathbf{1}_{\Omf\cap\OO1} \\
  & & \leq  \dfrac{1+\varepsilon}{4} \left[|m'|+8(1+\varepsilon) \sqrt{(L_{m'} |m'|+\xi)|m'|}+ 4(1+\varepsilon)(L_{m'}|m'|+\xi)  \right] +\dfrac{1}{1+\varepsilon}K(\bar{s}_{m'},\hat{s}_{m'}).
    \end{eqnarray*}

\subsection{Proof of proposition \ref{t2}} \label{a-t2}

\subsubsection*{Poisson case}
    Noting that $\E[(\barga(\barsm)-\barga(s))\mathbf{1}_{\Omf} ] = -\E[(\barga(\barsm)-\barga(s))\mathbf{1}_{\Omf^C} ]$,
    we have
    \begin{eqnarray*}
      |\E[(\barga(\barsm)-\barga(s))\mathbf{1}_{\Omf}  ]| &\leq& |\E[(\barga(\barsm)-\barga(s))\mathbf{1}_{\Omf^C} ]| \leq \E[| (\barga(\barsm)-\barga(s)) |\mathbf{1}_{\Omf^C} ] \\
& \leq& \E\left[\left| \left(\sum_J \sum_t (Y_t-E_t) \log{(\rho_{max}/\rho_{min})}\right) \right|\mathbf{1}_{\Omf^C} \right] \\
& \leq & \log{(\rho_{max}/\rho_{min})} \times \E\left[\left|  \sum_t (Y_t-E_t) \right|\mathbf{1}_{\Omf^C} \right] \\
& \leq& \log{(\rho_{max}/\rho_{min})} \times \left (\left[\E\left(\sum_t  (Y_t-E_t)\right)^2   \right]^{1/2} \times \left(P(\Omf^C\right)^{1/2} \right ) \\
& \leq& \left(n \rho_{max}\right)^{1/2} \times \log{(\rho_{max}/\rho_{min})} \times (P(\Omf^C)^{1/2},
    \end{eqnarray*}
which concludes the proof.

\subsubsection*{Negative binomial case}
Once again, $\E[(\barga(\barsm)-\barga(s))\mathbf{1}_{\Omf} ] =
-\E[(\barga(\barsm)-\barga(s))\mathbf{1}_{\Omf^C} ]$, and
    \begin{eqnarray*}
      |\E[(\barga(\barsm)-\barga(s))\mathbf{1}_{\Omf}  ]| &\leq& |\E[(\barga(\barsm)-\barga(s))\mathbf{1}_{\Omf^C} ]| \leq \E[| (\barga(\barsm)-\barga(s)) |\mathbf{1}_{\Omf^C} ] \\
& \leq& \E\left[\left| \left(\sum_J \sum_t \left(Y_t-\phi\frac{1-p_t}{p_t}\right) \log{(1/(1-\rho_{min}))}\right) \right|\mathbf{1}_{\Omf^C} \right] \\
& \leq & \log{(1/(1-\rho_{min}))} \times \E\left[\left| \sum_t \left(Y_t-E_t\right) \right|\mathbf{1}_{\Omf^C} \right] \\
& \leq& \left(n \phi\frac{1}{\rho_{min}^2}\right)^{1/2} \times \log{\frac{1}{1-\rho_{min}}} \times (P(\Omf^C)^{1/2} \\
    \end{eqnarray*}
which concludes the proof.

\subsection{Proof of proposition \ref{s-u}} \label{concentration}

Using the Markov inequality $\P\left[\bar{\gamma}(s)-\bar{\gamma}(u)
\geq b \right] \leq \inf_a \left[e^{-ab}
\E\left(e^{a(\bar{\gamma}(s)-\bar{\gamma}(u))} \right) \right]$ with
$a=\frac{1}{2}$, we get
 \begin{eqnarray*}
     \P\left[\bar{\gamma}(s)-\bar{\gamma}(u) \geq b \right] &\leq &\exp\left[ -\frac{b}{2} +\log \E\left[  \exp\left(\frac{1}{2}\left(\gamma(s)-\gamma(u)\right) +\frac{1}{2} \E\left [\gamma(u)-\gamma(s)\right ]\right)\right]\right] \\
     &\leq &\exp\left[ -\frac{b}{2} +\frac{1}{2}K(s,u) +\log\E\left[\exp\left(-\frac{1}{2}\sum_t \log \P_s(X_t=Y_t)+\log \P_u(X_t=Y_t) \right)\right]\right] \\
      &\leq &\exp\left[ -\frac{b}{2} +\frac{1}{2}K(s,u) +\sum_t \log\E\sqrt{\frac{\P_u(X_t=Y_t)}{\P_s(X_t=Y_t)}} \right] \\
      &\leq &\exp\left[ -\frac{b}{2} +\frac{1}{2}K(s,u) +\sum_t \E\sqrt{\frac{\P_u(X_t=Y_t)}{\P_s(X_t=Y_t)}} -n \right] \\
      &\leq &\exp\left[ -\frac{b}{2} +\frac{1}{2}K(s,u) -h^2(s,u) \right] \\
 \end{eqnarray*}
where $\P_s=\P$ denote the probability under the distribution $s$.
Thus
      \begin{eqnarray*}
        \P\left[\bar{\gamma}(s)-\bar{\gamma}(u) \geq K(s,u)-2h^2(s,u)+2x \right] &\leq &  e^{-x}.
      \end{eqnarray*}

\begin{acknowledgement}
The authors wish to thank St\'ephane Robin for more than helpful
discussions on the statistical aspect and Gavin Sherlock for his
insight on the biological applications.
\end{acknowledgement}

\bibliographystyle{bmc_article}
\bibliography{Biblio}
\end{document}